\newtheorem{thm}{Theorem}[section]
\newtheorem{proposition}[thm]{Proposition}
\newtheorem{definition}[thm]{Definition}
\newtheorem{theorem}[thm]{Theorem}
\newtheorem{lemma}[thm]{Lemma}
\newtheorem{remark}[thm]{Remark}
\newtheorem{notation}[thm]{Notation}
\newtheorem{condition}[thm]{Condition}
\begin{document}
\title[]%
{Closedness and invertibility for the sum of two closed operators}

\author{Nikolaos Roidos}
\address{Institut f\"ur Analysis, Leibniz Universit\"at Hannover, Welfengarten 1, 30167 Hannover, Germany}
\email{roidos@math.uni-hannover.de}

\begin{abstract}
We show a Kalton-Weis type theorem for the general case of non-commuting operators. More precisely, we consider sums of two possibly non-commuting linear operators defined in a Banach space such that one of the operators admits a bounded $H^\infty$-calculus, the resolvent of the other one satisfies some weaker boundedness condition and the commutator of their resolvents has certain decay behavior with respect to the spectral parameters. Under this consideration, we show that the sum is closed and that after a sufficiently large positive shift it becomes invertible, and moreover sectorial. As an application we recover a classical result on the existence, uniqueness and maximal $L^{p}$-regularity for solutions of the abstract linear non-autonomous parabolic problem.
\end{abstract}

\subjclass[2010]{47A05; 47A10; 47A60; 35K90}
\date{\today}

\maketitle

\section{Introduction}

Let $E$ be a complex Banach space and $A:\mathcal{D}(A)\rightarrow E$, $B:\mathcal{D}(B)\rightarrow E$ be two closed possibly non-commuting linear operators in $E$. We consider the question of whether the sum $A+B$ with domain $\mathcal{D}(A+B)=\mathcal{D}(A)\cap\mathcal{D}(B)$ is also closed. Furthermore, we ask under which assumptions the sum becomes invertible. The last is related to the existence, uniqueness and maximal regularity for solutions of the following abstract linear equation, namely 
$$
(A+B)x=y, \quad y\in E.
$$ 
We distinguish between two cases according to whether the operators commute or not, where by commuting we mean resolvent commuting (see e.g. \cite[(III.4.9.1)]{Am}).

For a quick review on the above two problems, we start with the classical result of Da Prato and Grisvard \cite{DG}. They showed that the sum of two sectorial operators is closable and the closure is invertible if the strong parabolicity condition on their sectoriality angles is fulfilled. In the non-commuting case they required the commutation condition \cite[(6.5)]{DG}.

If we restrict to the commuting case, then we mention two classical results that further provide closedness. By employing the underlying properties of UMD Banach spaces, Dore and Venni in \cite{DV} showed that the sum of two operators that have bounded imaginary powers is closed and invertible, provided that the strong parabolicity condition on their power angles is satisfied. Next, Kalton and Weis in \cite{KW1} treated the problem as a special case of operator valued holomorphic functional calculus. Without assumption on the geometry of the Banach space, they showed that if one of the operators admits a bounded $H^\infty$-calculus, the other one is $R$-sectorial (or even $U$-sectorial) and the strong parabolicity condition on the corresponding angles is fulfilled, then the sum is closed and invertible. 

For the general case of non-commuting operators we mention the following two remarkable generalizations. First, Monniaux and Pr\"uss in \cite{MP} showed that the Dore-Venni theorem can be extended to the non-commuting case if furthermore the Labbas-Terreni condition \cite[(2.6)]{MP} is satisfied. Then, Pr\"uss and Simonett, extended the Kalton-Weis theorem to the general case of non-commuting operators provided that either the Da Prato-Grisvard condition \cite[(3.1)]{PS} or the Labbas-Terreni condition \cite[(3.2)]{PS} is satisfied.

In the present paper we give an answer to the problems of closedness and invertibility of the sum of two operators simultaneously (Theorem \ref{t2}). We extend the Kalton-Weis theorem to the case of possibly non-commuting operators under certain decay condition on the commutator of the two resolvents with respect to the spectral parameters (Condition \ref{c3}). Moreover, we find one more commutation condition that is stronger than the above one but it does not imply the Da Prato-Grisvard condition or the Labbas-Terreni condition (see Remark \ref{rcomp}). In addition, instead of asking for one of the summands to be $R$-sectorial we introduce a weaker boundedness condition, which is a boundedness property for the resolvent based on Bochner-norm estimates over an arbitrary measure space (Definition \ref{don}). 

 As an application of our main theorem, we recover a well-known result on the existence, uniqueness and maximal $L^{p}$-regularity for solutions of the abstract linear non-autonomous parabolic problem in UMD spaces (Theorem \ref{tf}). We treat the problem by constructing the inverse of the sum of the time derivative and the non-automomous term. The benefit of our approach is that the above inverse is explicitly expressed by using Neumann series in terms of the pointwise freezings of the non-autonomous operator. As a consequence, the mapping properties of the inverse can be easily controlled in terms of the data in our construction. Therefore, maximal $L^p$-regularity space estimates for the solutions can be obtained, which are useful e.g. for showing existence of long time solutions for quasilinear parabolic problems (see e.g. \cite[Theorem 4.6]{RS}).

\section{Notation and preliminaries}

Denote by $\rho$ and $\sigma$ the resolvent and the spectrum of a linear operator respectively. We start with the basic notion of a sectorial operator.

\begin{definition}
Let $E$ be a complex Banach space and $\theta\in[0,\pi)$. Let $\mathcal{P}_{\kappa}(\theta)$, $\kappa\geq1$, be the class of all closed densely defined linear operators $A$ in $E$ such that 
$$
S_{\theta}=\{\lambda\in\mathbb{C}\,|\, |\arg \lambda|\leq\theta\}\cup\{0\}\subset\rho{(-A)} \quad \mbox{and} \quad (1+|\lambda|)\|(A+\lambda)^{-1}\|_{\mathcal{L}(E)}\leq \kappa, \quad \forall \lambda\in S_{\theta}.
$$
The elements in $\mathcal{P}(\theta)=\cup_{\kappa\geq1}\mathcal{P}_{\kappa}(\theta)$ are called {\em (invertible) sectorial operators of angle $\theta$}. If $A\in \mathcal{P}(\theta)$ then any $\kappa$ such that $A\in \mathcal{P}_{\kappa}(\theta)$ is called {\em sectorial bound of $A$} and the constant $\inf\{\kappa\, |\, A\in \mathcal{P}_{\kappa}(\theta)\}$ depends on $A$ and $\theta$.
\end{definition}

If $A\in \mathcal{P}_{\kappa}(\theta)$, then a sectoriality area extension argument (see e.g. \cite[(III.4.7.11)]{Am} or the Appendix of \cite{Ro1}) implies that 
\begin{gather*}
\Omega_{\kappa,\theta}=\bigcup_{z\in S_{\theta}}\Big\{\lambda\in\mathbb{C}\,|\,|\lambda-z|\leq\frac{1+|z|}{2\kappa}\Big\}\subset \rho{(-A)} 
\end{gather*}
and
\begin{gather*}
(1+|\lambda|)\|(A+\lambda)^{-1}\|_{\mathcal{L}(E)}\leq 2\kappa+1, \quad \forall \lambda\in \Omega_{\kappa,\theta}.
\end{gather*}
Therefore, whenever $A\in \mathcal{P}(\theta)$ we can assume that $\theta>0$ (see e.g. \cite[(III.4.6.4)]{Am} and \cite[(III.4.6.5)]{Am}).
For any $\rho\geq0$ and $\theta\in(0,\pi)$, let the positively oriented path 
\[
\Gamma_{\rho,\theta}=\{re^{-i\theta}\in\mathbb{C}\,|\,r\geq\rho\}\cup\{\rho e^{i\phi}\in\mathbb{C}\,|\,\theta\leq\phi\leq2\pi-\theta\}\cup\{re^{+i\theta}\in\mathbb{C}\,|\,r\geq\rho\},
\]
where we denote $\Gamma_{0,\theta}$ simply by $\Gamma_{\theta}$. We can define holomorphic functional calculus for sectorial operators by using the Dunford integral formula. Then, the following basic property can be satisfied. 

\begin{definition} 
Let $E$ be a complex Banach space, $\theta\in(0,\pi)$, $\phi\in[0,\theta)$ and $A\in\mathcal{P}(\theta)$. Let $H_{0}^{\infty}(\phi)$ be the space of all bounded holomorphic functions $f:\mathbb{C}\backslash S_{\phi}\rightarrow \mathbb{C}$ such that 
\[
|f(\lambda)|\leq c \Big(\frac{|\lambda|}{1+|\lambda|^{2}}\Big)^{\eta} \quad \mbox{for any} \quad \lambda\in \mathbb{C}\backslash S_{\phi}, 
\]
with some $c>0$ and $\eta>0$ depending on $f$. Any $f\in H_{0}^{\infty}(\phi)$ defines an element $f(-A)\in \mathcal{L}(E)$ by 
\[
f(-A)=\frac{1}{2\pi i}\int_{\Gamma_{\theta}}f(\lambda)(A+\lambda)^{-1} d\lambda.
\]
We say that the operator $A$ {\em admits a bounded $H^{\infty}$-calculus of angle $\phi$}, and we denote by $A\in \mathcal{H}^{\infty}(\phi)$, if
\begin{gather*}
\|f(-A)\|_{\mathcal{L}(E)}\leq C_{A,\phi}\sup_{\lambda\in\mathbb{C}\backslash S_{\phi}}|f(\lambda)| \quad \mbox{for any} \quad f\in H_{0}^{\infty}(\phi),
\end{gather*}
with some constant $C_{A,\phi}>0$ that is called {\em$H^\infty$-bound} of $A$ and depends only on $A$ and $\phi$.
\end{definition}

Let $A:\mathcal{D}(A)\rightarrow E$ be a linear operator in a complex Banach space $E$ such that $A\in \mathcal{P}(\theta)\cap\mathcal{H}^{\infty}(\phi)$, $0\leq\phi<\theta<\pi$. Denote by $A^{\ast}:\mathcal{D}(A^{\ast})\rightarrow E^{\ast}$ the adjoint of $A$ defined in the continuous dual space $E^{\ast}$ of $E$. Then, $A^{\ast}\in \mathcal{P}(\theta)\cap\mathcal{H}^{\infty}(\phi)$ provided that $\mathcal{D}(A^\ast)$ is dense in $E^{\ast}$, which will be always assumed in the sequel. This is \cite[Proposition 1.3 (v)]{DHP} and \cite[Proposition 2.11 (v)]{DHP}. We recall next a boundedness property of operators having bounded $H^\infty$-functional calculus, which will be of particular importance in our estimates later on. Let $\mathbb{D}=\{z\in\mathbb{C}\, |\, |z|\leq1\}$ be the closed unit disk in $\mathbb{C}$ and $\mathbb{N}=\{1,2,...\}$.

\begin{lemma}\label{LKW}
Let $E$ be a complex Banach space, $A\in\mathcal{H}^{\infty}(\phi)$ and $f\in H_{0}^{\infty}(\phi)$. For any $t>0$ and any finite sequence $\{a_{k}\}_{k\in\{0,...,n\}}$, $n\in\mathbb{N}\cup\{0\}$, with $a_{k}\in \mathbb{D}$ for each $k$, we have that 
$$
\|\sum_{k=0}^{n}a_{k}f(-t2^{-k}A)\|_{\mathcal{L}(E)}\leq C_{A,\phi,f}
$$
for some constant $C_{A,\phi,f}>0$ depending only on the $H^\infty$-bound of $A$, $\phi$ and $f$.
\end{lemma}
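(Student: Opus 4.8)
The plan is to reduce the estimate to the bounded $H^\infty$-calculus of $A$ applied to a single, fixed function built by superposition. First I would introduce the bounded holomorphic function
\[
g(\lambda)=\sum_{k=0}^{n}a_{k}\,h\!\bigl(t2^{-k}\lambda\bigr),\qquad \lambda\in\mathbb{C}\setminus S_{\phi},
\]
and observe that $g\in H_0^\infty(\phi)$, so that $g(-A)=\sum_{k=0}^n a_k h(-t2^{-k}A)$ by linearity of the calculus (the scaling $h(-sA)$ makes sense since $sA\in\mathcal H^\infty(\phi)$ with the same bound for every $s>0$, the defining decay estimate being scale invariant). By the defining property of $A\in\mathcal H^\infty(\phi)$,
\[
\Bigl\|\sum_{k=0}^{n}a_{k}h(-t2^{-k}A)\Bigr\|_{\mathcal L(E)}=\|g(-A)\|_{\mathcal L(E)}\le C_{A,\phi}\,\sup_{\lambda\in\mathbb{C}\setminus S_\phi}|g(\lambda)|,
\]
so the whole problem collapses to the scalar bound $\sup_{\lambda\notin S_\phi}|g(\lambda)|\le C$ with $C$ independent of $t$, $n$ and the $a_k\in\mathbb D$.

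For that scalar bound, since $|a_k|\le 1$ it suffices to show $\sup_{\lambda\notin S_\phi}\sum_{k\ge0}|h(t2^{-k}\lambda)|\le C_h$ uniformly in $t>0$. Using $h\in H_0^\infty(\phi)$, i.e. $|h(\mu)|\le c\,(|\mu|/(1+|\mu|^2))^\eta$, the summand is controlled by $c\min(|\mu|^\eta,|\mu|^{-\eta})$ with $\mu=t2^{-k}\lambda$. Writing $s=t|\lambda|>0$, the sum is dominated by $c\sum_{k\ge0}\min\bigl((s2^{-k})^\eta,(s2^{-k})^{-\eta}\bigr)$; splitting the index set at $2^k\sim s$ gives two geometric series with ratio $2^{-\eta}<1$, whose total is bounded by a constant depending only on $\eta$ (hence only on $h$), uniformly in $s$. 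This yields the claim with $C_{A,h}=c'\,C_{A,\phi}$ depending only on $A$ and $h$.

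The only point requiring a little care — and the place where one must be slightly careful rather than where any real difficulty lies — is the justification that $g(-A)=\sum_k a_k h(-t2^{-k}A)$, i.e. that the (finite) sum of Dunford integrals equals the Dunford integral of the sum; this is immediate by linearity of the integral over $\Gamma_\theta$ since the sum is finite and each $h(t2^{-k}\,\cdot\,)$ lies in $H_0^\infty(\phi)$, so that termwise integration is legitimate and the resulting integrand $g(\lambda)(A+\lambda)^{-1}$ is absolutely integrable on $\Gamma_\theta$. With this in hand the estimate is exactly the composition of the $H^\infty$-bound of $A$ with the elementary scalar estimate above, so no genuine obstacle arises.
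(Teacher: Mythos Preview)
Your proof is correct and is exactly the standard argument: the paper does not give its own proof here but simply cites \cite[Lemma~4.1]{KW1}, and the Kalton--Weis proof of that lemma is precisely the reduction you carry out, namely applying the $H^\infty$-bound of $A$ to the single function $g(\lambda)=\sum_{k}a_k h(t2^{-k}\lambda)$ and then controlling $\sup_{\lambda\notin S_\phi}\sum_k|h(t2^{-k}\lambda)|$ by the geometric-series estimate based on $|h(\mu)|\le c\min(|\mu|^{\eta},|\mu|^{-\eta})$. Nothing is missing.
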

\begin{proof}
This is \cite[Lemma 4.1]{KW1}.
\end{proof}

A typical example of the functional calculus for a sectorial operator $A\in\mathcal{P}(\theta)$ are the complex powers. For $\mathrm{Re}(z)<0$ they are defined by
\begin{gather}\label{cp}
A^{z}=\frac{1}{2\pi i}\int_{\Gamma_{\rho,\theta}}(-\lambda)^{z}(A+\lambda)^{-1}d\lambda,
\end{gather}
where $\rho>0$ is sufficiently small. The above family together with $A^{0}=I$ is a strongly continuous holomorphic semigroup on $E$ (see e.g. \cite[Theorem III.4.6.2 ]{Am} and \cite[Theorem III.4.6.5]{Am}). Note that by a sectoriality area extension argument, we can replace $\Gamma_{\rho,\theta}$ in \eqref{cp} by $-\delta+\Gamma_{\theta}$ with $\delta>0$ sufficiently small. Moreover, each operator $A^{z}$, $\mathrm{Re}(z)<0$, is injection and the complex powers for positive real part are defined by $(A^{z})^{-1}$. The imaginary powers are defined as the closure of a variation of formula \eqref{cp}. We refer to \cite[Section III.4.6]{Am} for a detailed description. Next, we recall the following elementary decay property of the resolvent of a sectorial operator.

\begin{lemma}\label{l1}
Let $E$ be a complex Banach space, $\rho\in(0,1)$ and $A\in\mathcal{P}_{\kappa}(\theta)$, $\theta\in(0,\pi)$, $\kappa\geq1$. Then, for any $\phi\in[0,\theta)$ and any $\eta\in[0,1-\rho)$ we have that 
$$
\|A^{\rho}(A+z)^{-1}\|_{\mathcal{L}(E)}\leq\frac{\gamma}{1+|z|^{\eta}}, \quad z\in S_{\phi},
$$
for some constant $\gamma>0$ depending on $\kappa$, $\theta$, $\rho$, $\phi$ and $\eta$.
\end{lemma}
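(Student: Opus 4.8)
The plan is to reduce the estimate to two standard resolvent bounds: the sectoriality of $A$ applied to the resolvent $(A+z)^{-1}$ for large $|z|$, and the boundedness of the fractional power $A^{\rho}$ applied to the same resolvent for bounded $|z|$. First I would fix $\phi\in[0,\theta)$ and $\eta\in[0,1-\rho)$ and split the parameter region into $|z|\leq 1$ and $|z|>1$. On the bounded piece, one only needs to show that $A^{\rho}(A+z)^{-1}$ is uniformly bounded for $z\in S_{\phi}\cap\mathbb{D}$; since $1+|z|^{\eta}$ is bounded above and below there, the claimed estimate is equivalent to plain boundedness. On the unbounded piece $|z|>1$, $z\in S_{\phi}$, one has $1+|z|^{\eta}\sim|z|^{\eta}$, so it suffices to produce a bound of order $|z|^{-\eta}$, and here sectoriality of $A$ gives $\|(A+z)^{-1}\|_{\mathcal{L}(E)}\leq \kappa(1+|z|)^{-1}\leq \kappa|z|^{-1}$, which would handle the case $\rho=0$; the fractional power $A^{\rho}$ then needs to be absorbed at the cost of $|z|^{\rho}$, leaving a decay of order $|z|^{\rho-1}$, and since $\eta<1-\rho$ this is more than enough.

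The technical heart is therefore the interpolation-type bound $\|A^{\rho}(A+z)^{-1}\|\lesssim |z|^{\rho-1}$ for large $|z|$ and boundedness for small $|z|$. I would obtain this from the Dunford-integral representation of $A^{\rho}(A+z)^{-1}$: using the moment inequality (or the integral formula $A^{\rho}x=\frac{\sin(\pi\rho)}{\pi}\int_{0}^{\infty}t^{\rho-1}A(A+t)^{-1}x\,dt$ for $x\in\mathcal{D}(A)$, valid since $0<\rho<1$) together with the resolvent identity, one writes $A^{\rho}(A+z)^{-1}$ as an absolutely convergent integral of $A(A+t)^{-1}(A+z)^{-1}$ against $t^{\rho-1}\,dt$. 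Applying the sectorial bound to each factor, $\|A(A+t)^{-1}\|\leq \kappa+1$ and $\|(A+z)^{-1}\|\leq \kappa(1+|z|)^{-1}$, and using the resolvent identity to gain an extra factor $(t+|z|)^{-1}$ from $\|(A+t)^{-1}(A+z)^{-1}\|\lesssim |t-z|^{-1}(\|(A+t)^{-1}\|+\|(A+z)^{-1}\|)$ — with $|t-z|\gtrsim t+|z|$ because $z\in S_{\phi}$ and $t>0$ so $z$ and $-t$ are separated by an angle bounded away from $0$ — one is left with $\int_{0}^{\infty} t^{\rho-1}(t+|z|)^{-1}\,dt$, which by the substitution $t=|z|s$ equals $|z|^{\rho-1}\int_{0}^{\infty}s^{\rho-1}(s+1)^{-1}\,ds$, a finite constant since $0<\rho<1$. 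This gives exactly $\|A^{\rho}(A+z)^{-1}\|\lesssim |z|^{\rho-1}$, uniformly in $z\in S_{\phi}$, and in particular boundedness near $z=0$ as well.

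Combining the two regimes: for $|z|\leq 1$ the bound $|z|^{\rho-1}$ is not itself bounded, so there I would instead argue directly that $A^{\rho}(A+z)^{-1}=A^{\rho}(A+z_0)^{-1}\cdot (A+z_0)(A+z)^{-1}$ for a fixed $z_0$ (say $z_0=1$), where $A^{\rho}(A+1)^{-1}\in\mathcal{L}(E)$ and $(A+z_0)(A+z)^{-1}=I+(z_0-z)(A+z)^{-1}$ is uniformly bounded for $z\in S_{\phi}\cap\mathbb{D}$ by sectoriality; for $|z|>1$ I would use the bound from the previous paragraph together with $\rho-1<-\eta$, so $|z|^{\rho-1}\leq |z|^{-\eta}\leq (1+|z|^{\eta})^{-1}\cdot 2$. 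Assembling these gives $\|A^{\rho}(A+z)^{-1}\|_{\mathcal{L}(E)}\leq \gamma/(1+|z|^{\eta})$ for all $z\in S_{\phi}$, with $\gamma$ depending only on $\kappa,\theta,\rho,\phi,\eta$.

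The main obstacle I expect is bookkeeping the constants and, more substantively, justifying the geometric inequality $|t-z|\geq c(t+|z|)$ uniformly for $t>0$ and $z\in S_{\phi}$: this is where the hypothesis $\phi<\theta$ (hence $\phi<\pi$, so $-t\notin \overline{S_{\phi}}$) enters, and the constant $c=c(\phi)$ can be taken as $\sin$ of half the angular gap, or simply $c=\cos(\phi/2)$ after an elementary estimate. Everything else is routine once that separation is in hand.
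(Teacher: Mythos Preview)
Your overall strategy is sound and would give a valid alternative to the paper's proof, but there is a genuine error at precisely the step you yourself flag as ``the main obstacle.'' You claim $|t-z|\geq c(t+|z|)$ for $t>0$ and $z\in S_{\phi}$, justified by the angular separation of $z$ and $-t$. But the resolvent identity produces the factor $(z-t)^{-1}$, not $(z+t)^{-1}$: it is the separation of $z$ from $t$ (not from $-t$) that matters, and since $S_{\phi}$ contains the positive real axis, $z$ and $t$ can coincide (e.g.\ $z=t=1$ gives $|t-z|=0$). The inequality is simply false, and no constant $c(\phi)$ rescues it; your proposed $c=\cos(\phi/2)$ bounds $|t+z|/(t+|z|)$, not $|t-z|/(t+|z|)$.

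The repair is easy: abandon the resolvent identity and use instead that the resolvents of a single operator commute, so $A(A+t)^{-1}(A+z)^{-1}=A(A+z)^{-1}(A+t)^{-1}$, whence
\[
\|A(A+t)^{-1}(A+z)^{-1}\|\leq C\min\bigl((1+t)^{-1},(1+|z|)^{-1}\bigr).
\]
Integrating $t^{\rho-1}$ against this minimum and splitting at $t=|z|$ yields the desired bound $C|z|^{\rho-1}$ for $|z|\geq 1$. With this correction your argument goes through and is genuinely different from the paper's. The paper does not split into small and large $|z|$ and does not use the Balakrishnan formula on the positive half-line; instead it derives a single Dunford-type representation
\[
A^{\rho}(A+z)^{-1}=\frac{1}{2\pi i}\int_{-\delta+\Gamma_{\theta}}\frac{\lambda(-\lambda)^{\rho-1}}{\lambda-z}(A+\lambda)^{-1}\,d\lambda
\]
on a contour shifted away from both the origin and the sector $S_{\phi}$, so that the factor $(\lambda-z)^{-1}$ really is controlled by angular separation, then multiplies by $z^{\eta}$ and estimates in one stroke. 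Your route is more elementary once fixed; the paper's contour approach avoids the case split and fits the functional-calculus machinery used throughout.
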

\begin{proof}
For any $z\in S_{\phi}$, by Cauchy's theorem we have that
\begin{eqnarray*}\nonumber
\lefteqn{A^{\rho}(A+z)^{-1}=\frac{1}{2\pi i}A\int_{-\delta+\Gamma_{\theta}}(-\lambda)^{\rho-1}(A+\lambda)^{-1}(A+z)^{-1}d\lambda}\\\nonumber
&=&\frac{1}{2\pi i}A\int_{-\delta+\Gamma_{\theta}}\frac{(-\lambda)^{\rho-1}}{z-\lambda}((A+\lambda)^{-1}-(A+z)^{-1})d\lambda\\\nonumber
&=&\frac{1}{2\pi i}A\int_{-\delta+\Gamma_{\theta}}\frac{(-\lambda)^{\rho-1}}{z-\lambda}(A+\lambda)^{-1}d\lambda-\frac{1}{2\pi i}A(A+z)^{-1}\int_{-\delta+\Gamma_{\theta}}\frac{(-\lambda)^{\rho-1}}{z-\lambda}d\lambda\\\nonumber
&=&\frac{1}{2\pi i}\int_{-\delta+\Gamma_{\theta}}\frac{(-\lambda)^{\rho-1}}{z-\lambda}(A+\lambda-\lambda)(A+\lambda)^{-1}d\lambda\\\nonumber
&=&\frac{1}{2\pi i}\int_{-\delta+\Gamma_{\theta}}\frac{(-\lambda)^{\rho-1}}{z-\lambda}yd\lambda+\frac{1}{2\pi i}\int_{-\delta+\Gamma_{\theta}}\frac{\lambda(-\lambda)^{\rho-1}}{\lambda-z}(A+\lambda)^{-1}d\lambda\\\label{frac}
&=&\frac{1}{2\pi i}\int_{-\delta+\Gamma_{\theta}}\frac{\lambda(-\lambda)^{\rho-1}}{\lambda-z}(A+\lambda)^{-1}d\lambda,
\end{eqnarray*}
for some fixed $\delta>0$ sufficiently small due to a sectoriality area extension argument. Therefore, we obtain
\begin{gather*}
z^{\eta}A^{\rho}(A+z)^{-1}=\frac{1}{2\pi i}\int_{\delta-\Gamma_{\theta}}\frac{(\frac{z}{\lambda})^{\eta}}{1+\frac{z}{\lambda}}\lambda^{\rho+\eta-1}(A-\lambda)^{-1}d\lambda,
\end{gather*}
and the estimate follows by this formula.
\end{proof}

Next we focus on families of bounded operators and introduce a boundedness property with respect to orthonormal sets on an arbitrary measure space.

\begin{notation}
Denote by $\mathcal{S}=(\Omega,\Sigma,\mu)$ an arbitrary finite measure space and by $\mathcal{E}_{n}=\{e_{1},...,e_{n}\}$, $n\in\mathbb{N}$, a finite sequence of vectors in $L^{\infty}(\Omega;\mathbb{C},d\mu)$ with $\|e_{k}\|_{L^{\infty}(\Omega;\mathbb{C},d\mu)}\leq 1$, $k\in\{1,...,n\}$, such that all vectors in $\mathcal{E}_{n}$ are orthonormal in $L^{2}(\Omega;\mathbb{C},d\mu)$. Furthermore, let $E$ be a complex Banach space and denote by $\mathcal{X}_{n}=\{x_{1},...,x_{n}\}$ a finite sequence of vectors in $E$. Also, if $\mathcal{F}\subset\mathcal{L}(E)$ is a family of bounded operators on $E$, denote by $\mathcal{T}_{n}=\{T_{1},...,T_{n}\}$ a finite sequence of vectors in $\mathcal{F}$. Finally, denote by $L^{2}(\Omega;E,d\mu)$ the Bochner space. 
\end{notation}

\begin{definition}\label{don}
Let $E$ be a complex Banach space and $\mathcal{F}\subset\mathcal{L}(E)$ be a family of bounded operators on $E$. According to the previous notation, $\mathcal{F}$ is called {\em orthonormally bounded with respect to the measure space $\mathcal{S}$} if for any triple $\tau=(n, \mathcal{X}_{n}, \mathcal{T}_{n})$ there exists some $\mathcal{E}_{n}$ that depends on $\tau$, such that 
$$
\|\sum_{k=1}^{n}e_{k}T_{k}x_{k}\|_{L^{2}(\Omega;E,d\mu)}\leq C_{\mathcal{F},\mathcal{S}}\big(\sup_{a_{k}\in \mathbb{D}}\|\sum_{k=1}^{n}a_{k}x_{k}\|_{E}\big)
$$
for some constant $C_{\mathcal{F},\mathcal{S}}\geq 1$ which is called {\em orthonormal bound} or {\em $ON$-bound} and depends only on $\mathcal{F}$ and $\mathcal{S}$. If for some family $\mathcal{F}$ there exists some finite measure space $S$ such that $\mathcal{F}$ is orthonormally bounded with respect to $S$, then we say that $\mathcal{F}$ is {\em orthonormally bounded} or {\em $ON$-bounded}.
\end{definition}

In our estimates we will actually require a boundedness condition weaker than the $ON$-boundedness, which is described in the following lemma. 

\begin{lemma}\label{weakon}
Let $E$ be a complex Banach space and $\mathcal{F}\subset\mathcal{L}(E)$ be $ON$-bounded with $ON$-bound equal to $C_{\mathcal{F},\mathcal{S}}$ with respect to some measure space $\mathcal{S}=(\Omega,\Sigma,\mu)$. Then, for any $x_{1},...,x_{n}\in E$, $x_{1}^{\ast},...,x_{n}^{\ast}\in E^{\ast}$ and $T_{1},...,T_{n}\in \mathcal{F}$, $n\in\mathbb{N}$, we have that 
$$
|\sum_{k=1}^{n}\langle T_{k}x_{k},x_{k}^{\ast}\rangle|\leq \widetilde{C}_{\mathcal{F},\mathcal{S}}\big(\sup_{a_{k}\in \mathbb{D}}\|\sum_{k=1}^{n}a_{k}x_{k}\|_{E}\big)\big(\sup_{b_{k}\in \mathbb{D}}\|\sum_{k=1}^{n}b_{k}x_{k}^{\ast}\|_{E^{\ast}}\big),
$$
where $\widetilde{C}_{\mathcal{F},\mathcal{S}}=C_{\mathcal{F},\mathcal{S}}(\mathrm{Vol}(\Omega))^{\frac{1}{2}}$.
\end{lemma}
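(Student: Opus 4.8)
The plan is to use the orthonormality of the system $\mathcal{E}_{n}$ supplied by the $ON$-boundedness hypothesis in order to recover each vector $T_{k}x_{k}$ as a \emph{Fourier coefficient} of the $E$-valued function $F=\sum_{j=1}^{n}e_{j}T_{j}x_{j}$, and then to estimate the pairing by two applications of the Cauchy--Schwarz inequality, one in the Bochner space and one against the constant function $1$.

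First I would fix the triple $\tau=(n,\mathcal{X}_{n},\mathcal{T}_{n})$ with $\mathcal{X}_{n}=\{x_{1},\dots,x_{n}\}$, $\mathcal{T}_{n}=\{T_{1},\dots,T_{n}\}$, and let $\mathcal{E}_{n}=\{e_{1},\dots,e_{n}\}$ be the associated orthonormal system in $L^{2}(\Omega;\mathbb{C},d\mu)$ with $\|e_{k}\|_{L^{\infty}(\Omega;\mathbb{C},d\mu)}\leq1$ granted by Definition \ref{don}. Set $F(\omega)=\sum_{j=1}^{n}e_{j}(\omega)T_{j}x_{j}$. Since the $T_{j}x_{j}$ are fixed vectors of $E$ and the $e_{j}$ are bounded measurable scalar functions on the finite measure space $\Omega$, the map $F$ is strongly measurable, Bochner integrable against any bounded scalar function, and $F\in L^{2}(\Omega;E,d\mu)$ with $\|F\|_{L^{2}(\Omega;E,d\mu)}=\|\sum_{k=1}^{n}e_{k}T_{k}x_{k}\|_{L^{2}(\Omega;E,d\mu)}$. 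Using the orthonormality relations $\int_{\Omega}e_{j}\overline{e_{k}}\,d\mu=\delta_{jk}$ one obtains, for each $k$, the reconstruction identity
$$
T_{k}x_{k}=\int_{\Omega}\overline{e_{k}(\omega)}\,F(\omega)\,d\mu(\omega)
$$
as a Bochner integral.

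Next I would pair this identity with $x_{k}^{\ast}$, sum over $k$, and move the finite sum and the duality pairing inside the Bochner integral to get
$$
\sum_{k=1}^{n}\langle T_{k}x_{k},x_{k}^{\ast}\rangle=\int_{\Omega}\Big\langle F(\omega),\sum_{k=1}^{n}\overline{e_{k}(\omega)}\,x_{k}^{\ast}\Big\rangle\,d\mu(\omega).
$$
Because $|e_{k}(\omega)|\leq1$ for $\mu$-a.e.\ $\omega$, each $\overline{e_{k}(\omega)}$ lies in $\mathbb{D}$, so the second slot is bounded in $E^{\ast}$ by $\sup_{b_{k}\in\mathbb{D}}\|\sum_{k=1}^{n}b_{k}x_{k}^{\ast}\|_{E^{\ast}}$ uniformly in $\omega$. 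Estimating the integrand pointwise by $\|F(\omega)\|_{E}$ times this supremum, and then applying the scalar Cauchy--Schwarz inequality against the constant function $1$ (whose $L^{2}(\Omega;\mathbb{C},d\mu)$-norm is $(\mathrm{Vol}(\Omega))^{1/2}$), yields
$$
\Big|\sum_{k=1}^{n}\langle T_{k}x_{k},x_{k}^{\ast}\rangle\Big|\leq(\mathrm{Vol}(\Omega))^{\frac{1}{2}}\,\|F\|_{L^{2}(\Omega;E,d\mu)}\,\Big(\sup_{b_{k}\in\mathbb{D}}\big\|\sum_{k=1}^{n}b_{k}x_{k}^{\ast}\big\|_{E^{\ast}}\Big).
$$
Finally, the defining inequality of $ON$-boundedness bounds $\|F\|_{L^{2}(\Omega;E,d\mu)}$ by $C_{\mathcal{F},\mathcal{S}}\sup_{a_{k}\in\mathbb{D}}\|\sum_{k=1}^{n}a_{k}x_{k}\|_{E}$, and combining the two estimates gives the claim with $\tilde{C}_{\mathcal{F},\mathcal{S}}=C_{\mathcal{F},\mathcal{S}}(\mathrm{Vol}(\Omega))^{1/2}$.

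The only step calling for genuine care — and the one I regard as the heart of the argument — is the reconstruction identity $T_{k}x_{k}=\int_{\Omega}\overline{e_{k}}\,F\,d\mu$: it is precisely here that the orthonormality of $\mathcal{E}_{n}$ (and not merely $\|e_{k}\|_{L^{\infty}}\leq1$) is used, and one must justify that $F$ is strongly measurable and Bochner integrable so that the finite sum, the integral, and the functional $x_{k}^{\ast}$ may be freely interchanged. Everything else is bookkeeping together with the two Cauchy--Schwarz estimates and the pointwise bound $|e_{k}|\leq1$.
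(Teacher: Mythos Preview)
Your proof is correct and follows essentially the same approach as the paper. The paper writes the key identity $\sum_{k}\langle T_{k}x_{k},x_{k}^{\ast}\rangle=\int_{\Omega}\langle\sum_{i}e_{i}T_{i}x_{i},\sum_{j}\bar{e}_{j}x_{j}^{\ast}\rangle\,d\mu$ directly from orthonormality (which is precisely your reconstruction identity summed and paired), then applies the duality bound and Cauchy--Schwarz in the same way; your version simply spells out the Bochner-integral justifications a bit more explicitly.
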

\begin{proof}
By Cauchy-Schwarz inequality we have that 
\begin{eqnarray*}
\lefteqn{| \sum_{k=1}^{n} \langle T_{k}x_{k},x_{k}^{\ast}\rangle|=| \int_{\Omega} \langle\sum_{i=1}^{n}e_{i}T_{i}x_{i},\sum_{j=1}^{n}\bar{e}_{j}x_{j}^{\ast}\rangle d\mu|}\\
&\leq&\int_{\Omega}\|\sum_{i=1}^{n}e_{i}T_{i}x_{i}\|_{E}\|\sum_{j=1}^{n}\bar{e}_{j}x_{j}^{\ast}\|_{E^{\ast}}d\mu\\
&\leq&\big(\int_{\Omega}\|\sum_{i=1}^{n}e_{i}T_{i}x_{i}\|_{E}^{2}d\mu\big)^{\frac{1}{2}} \big(\int_{\Omega}\|\sum_{j=1}^{n}\bar{e}_{j}x_{j}^{\ast}\|_{E^{\ast}}^{2}d\mu\big)^{\frac{1}{2}},
\end{eqnarray*}
for certain vectors $e_{1},...,e_{n}$ in $L^{\infty}(\Omega;\mathbb{C},d\mu)$ with $\|e_{k}\|_{L^{\infty}(\Omega;\mathbb{C},d\mu)}\leq 1$, $k\in\{1,...,n\}$, such that $e_{1},...,e_{n}$ are orthonormal in $L^{2}(\Omega;\mathbb{C},d\mu)$. Then, the estimate follows.
\end{proof}

By restricting to the case of resolvents of operators, we can generalize the notion of sectoriality as follows. 

\begin{definition}
Let $E$ be a complex Banach space, $\theta\in[0,\pi)$ and $A\in\mathcal{P}(\theta)$. We say that $A$ is {\em $ON$-sectorial of angle $\theta$}, and denote by $A\in\mathcal{ON}(\theta)$, if the family $\{\lambda(A+\lambda)^{-1}\, |\, \lambda \in S_{\theta}\backslash\{0\}\}$ is $ON$-bounded. In this case, we call the $ON$-bound as {\em $ON$-sectorial bound}.
\end{definition}

Similarly to the sectoriality, the $ON$-sectoriality of an operator is preserved under appropriate shifts and the resulting $ON$-sectorial bound remains uniformly bounded. 

\begin{lemma}\label{easy}
Let $A: \mathcal{D}(A)\rightarrow E$ be $ON$-sectorial of angle $\theta\in(0,\pi)$ with $ON$-sectorial bound $C_{A,\theta}$. If $\omega\in[0,\min\{\theta,\pi-\theta\})$, then for any $c\in S_{\omega}$, $A+c$ is $ON$-sectorial of angle $\theta$ with $ON$-sectorial bound $\leq \frac{C_{A,\theta}}{\sin(\theta+\omega)}$.
\end{lemma}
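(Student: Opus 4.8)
The plan is to reduce everything to the $ON$-sectoriality of $A$ by means of the elementary identity
$$
\lambda(A+c+\lambda)^{-1}=\frac{\lambda}{c+\lambda}\,(c+\lambda)\big(A+(c+\lambda)\big)^{-1},\qquad \lambda\in S_{\theta}\setminus\{0\},\ \ c\in S_{\omega},
$$
provided we first check that the point $c+\lambda$ again lies in $S_{\theta}\setminus\{0\}$ and that the scalar factor $\lambda/(c+\lambda)$ is bounded in modulus by $1/\sin(\theta+\omega)$. Granting this, each operator from the family $\{\lambda(A+c+\lambda)^{-1}\,|\,\lambda\in S_{\theta}\setminus\{0\}\}$ is a scalar multiple, with uniformly controlled scalar, of an operator from the $ON$-sectorial family of $A$, and the definition of $ON$-boundedness does the rest.

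The heart of the matter is the geometry, and this is where the assumption $\omega<\pi-\theta$ is used; it is also the only step that is not a routine transcription. Since $|\arg c|\le\omega$, $|\arg\lambda|\le\theta$ and $\omega+\theta<\pi$, representatives of $\arg c$ and $\arg\lambda$ can be chosen in an interval of length $<\pi$ lying inside $[-\theta,\theta]$ (because $[-\omega,\omega]\subseteq[-\theta,\theta]$), so that $\arg(c+\lambda)$ lies between them and hence $c+\lambda\in S_{\theta}$. For the modulus, I would write $c=|c|e^{i\psi}$, $\lambda=|\lambda|e^{i\varphi}$ with $|\psi-\varphi|\le\omega+\theta$ and minimize
$$
|c+\lambda|^{2}=|c|^{2}+|\lambda|^{2}+2|c||\lambda|\cos(\psi-\varphi)
$$
over $|c|\ge0$ with $|\lambda|$ fixed: if $\cos(\psi-\varphi)\ge0$ the minimum is $|\lambda|^{2}$, and if $\cos(\psi-\varphi)<0$ the minimum is $|\lambda|^{2}\sin^{2}(\psi-\varphi)\ge|\lambda|^{2}\sin^{2}(\theta+\omega)$, since in that case $\tfrac{\pi}{2}\le|\psi-\varphi|\le\theta+\omega<\pi$ and $\sin$ is decreasing there. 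Thus $|c+\lambda|\ge|\lambda|\sin(\theta+\omega)>0$, so the identity above makes sense, $|\lambda|/|c+\lambda|\le 1/\sin(\theta+\omega)$, and also $(1+|\lambda|)/(1+|c+\lambda|)\le 1/\sin(\theta+\omega)$; together with $S_{\theta}\subset\rho(-A)$ this already yields $S_{\theta}\subset\rho(-(A+c))$ and $A+c\in\mathcal{P}(\theta)$.

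Finally I would read off the $ON$-bound. Fix a finite measure space $\mathcal{S}=(\Omega,\Sigma,\mu)$ witnessing the $ON$-sectoriality of $A$. Given a triple $\tau=(n,\mathcal{X}_{n},\mathcal{T}_{n})$ for the family $\{\lambda(A+c+\lambda)^{-1}\}$, write $T_{k}=z_{k}S_{k}$ with $S_{k}=(c+\lambda_{k})(A+(c+\lambda_{k}))^{-1}$ belonging to the $ON$-sectorial family of $A$ and $|z_{k}|=|\lambda_{k}|/|c+\lambda_{k}|\le 1/\sin(\theta+\omega)$. Applying the $ON$-bound of $A$ to the triple $(n,\{z_{1}x_{1},\dots,z_{n}x_{n}\},\{S_{1},\dots,S_{n}\})$ produces an admissible $\mathcal{E}_{n}$ with
$$
\Big\|\sum_{k=1}^{n}e_{k}T_{k}x_{k}\Big\|_{L^{2}(\Omega;E,d\mu)}=\Big\|\sum_{k=1}^{n}e_{k}S_{k}(z_{k}x_{k})\Big\|_{L^{2}(\Omega;E,d\mu)}\le C_{A}\sup_{a_{k}\in\mathbb{D}}\Big\|\sum_{k=1}^{n}a_{k}z_{k}x_{k}\Big\|_{E},
$$
and since $|a_{k}z_{k}|\le 1/\sin(\theta+\omega)$ for $a_{k}\in\mathbb{D}$, the last supremum is bounded by $\frac{1}{\sin(\theta+\omega)}\sup_{b_{k}\in\mathbb{D}}\|\sum_{k}b_{k}x_{k}\|_{E}$. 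This gives precisely the asserted $ON$-sectorial bound $C_{A}/\sin(\theta+\omega)$, and the proof is complete.
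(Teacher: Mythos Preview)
Your proof is correct and follows essentially the same approach as the paper: rewrite $\lambda(A+c+\lambda)^{-1}$ as $(c+\lambda)(A+(c+\lambda))^{-1}$ acting on the rescaled vector $\frac{\lambda}{c+\lambda}x$, apply the $ON$-sectoriality of $A$, and absorb the scalar factors into the supremum over $\mathbb{D}$ via the bound $\sup_{\lambda\in S_{\theta}}|\lambda/(c+\lambda)|\le 1/\sin(\theta+\omega)$. The paper simply asserts this last estimate, whereas you supply a complete justification (including the verification that $c+\lambda\in S_{\theta}\setminus\{0\}$, which the paper leaves implicit); otherwise the arguments are identical.
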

\begin{proof}
Let $\mathcal{S}=(\Omega,\Sigma,\mu)$ be a finite measure space subject to the $ON$-sectoriality of $A$. For any $\lambda_{1},...,\lambda_{n}\in S_{\theta}\backslash\{0\}$ and $x_{1},...,x_{n}\in E$, $n\in\mathbb{N}$, we have that
\begin{eqnarray*}
\lefteqn{\|\sum_{k=1}^{n}e_{k}\lambda_{k}(A+c+\lambda_{k})^{-1}x_{k}\|_{L^{2}(\Omega;E,d\mu)}}\\
&=&\|\sum_{k=1}^{n}e_{k}(c+\lambda_{k})(A+c+\lambda_{k})^{-1}\frac{\lambda_{k}}{c+\lambda_{k}}x_{k}\|_{L^{2}(\Omega;E,d\mu)}\\
&\leq&C_{A,\theta}\sup_{a_{k}\in \mathbb{D}}\|\sum_{k=1}^{n}a_{k}\frac{\lambda_{k}}{c+\lambda_{k}}x_{k}\|_{E},\\
\end{eqnarray*}
for some vectors $e_{1},...,e_{n}$ in $L^{\infty}(\Omega;\mathbb{C},d\mu)$ with $\|e_{k}\|_{L^{\infty}(\Omega;\mathbb{C},d\mu)}\leq 1$, $k\in\{1,...,n\}$, such that $e_{1},...,e_{n}$ are orthonormal in $L^{2}(\Omega;\mathbb{C},d\mu)$. The result now follows by the estimate
$$
\sup_{\lambda\in S_{\theta}\backslash\{0\}}\Big|\frac{\lambda}{c+\lambda}\Big|\leq \frac{1}{\sin(\theta+\omega)}.
$$
\end{proof}

Next we consider the case of $ON$-boundedness where the vectors involving the estimate are taken from a fixed orthonormal set.

\begin{definition}
Let $E$ be a complex Banach space, $\mathcal{F}\subset\mathcal{L}(E)$ be a family of bounded operators in $E$, $\mathcal{S}=(\Omega,\Sigma,\mu)$ be a finite measure space and $\mathcal{E}=\{e_{k}\}_{k\in\mathbb{N}}$ be a fixed orthonormal set in $L^{2}(\Omega;\mathbb{C},d\mu)$ such that $e_{k}\in L^{\infty}(\Omega;\mathbb{C},d\mu)$ with $\|e_{k}\|_{L^{\infty}(\Omega;\mathbb{C},d\mu)}\leq 1$ for each $k\in\mathbb{N}$. Let $\mathcal{X}_{n}=\{x_{1},...,x_{n}\}$ a finite sequence of vectors in $E$ and $\mathcal{T}_{n}=\{T_{1},...,T_{n}\}$ be a finite sequence of vectors in $\mathcal{F}$, $n\in \mathbb{N}$. We say that $\mathcal{F}$ is {\em $\mathcal{E}$-bounded} if for any triple $\tau=(n,\mathcal{X}_{n},\mathcal{T}_{n})$ there exists a finite sequence $a_{1},...,a_{n}\in \mathbb{D}$ that depends on $\tau$, such that
$$
\|\sum_{k=1}^{n}e_{k}T_{k}x_{k}\|_{L^{2}(\Omega;E,d\mu)}\leq C_{\mathcal{F},\mathcal{E}} \|\sum_{k=1}^{n}a_{k}e_{k}x_{k}\|_{L^{2}(\Omega;E,d\mu)}
$$
for some constant $C_{\mathcal{F},\mathcal{E}}\geq 1$ which is called {\em $\mathcal{E}$-bound} and depends only on $\mathcal{F}$ and $\mathcal{E}$. Furthermore, an operator $A\in\mathcal{P}(\theta)$ in $E$ is called {\em $\mathcal{E}$-sectorial of angle $\theta$}, and we denote by $A\in\mathcal{E}(\theta)$, if the family $\{\lambda(A+\lambda)^{-1}\, |\, \lambda \in S_{\theta}\backslash\{0\}\}$ is $\mathcal{E}$-bounded. In this case, we call the $\mathcal{E}$-bound as {\em $\mathcal{E}$-sectorial bound}.
\end{definition} 

A special example of an $\mathcal{E}$-sectorial operator is any $R$-sectorial operator, i.e. a sectorial operator $A$ such that the family $\{\lambda(A+\lambda)^{-1}\, |\, \lambda \in S_{\theta}\backslash\{0\}\}$ is Rademacher bounded (see Definition \ref{Defrs}). Due to a property of the Rademacher functions, namely the Kahane's contraction principle (see e.g. \cite[Proposition 2.5]{KL1}), in this case the numbers $a_{k}$ in the above definition can be taken equal to one. 

Furthermore, in \cite[Theorem 2.8]{Ro2}, it has been shown that if a sectorial operator $A$ defined in a $UMD$ (unconditionality of martingale differences property) space has bounded imaginary powers with power angle $\phi<\pi$ (see \cite[Section III.4.7]{Am} for definition), then for any $\theta\in(\phi-\pi,\pi-\phi)$ the family $\mathcal{F}_{\theta}=\{(I+re^{-k+i\theta}A)^{-1}\, |\, r\in[\frac{1}{e},1], k\in\mathbb{N}\cup\{0\}\}$ is $\mathcal{E}$-bounded with respect to the orthonormal set $\mathcal{E}=\{e^{ikt}/\sqrt{2\pi}\}_{k\in\mathbb{N}\cup\{0\}}$ in $L^{2}(0,2\pi)$. This together with \cite[Theorem 2.6]{Ro2} showed for example that the sum of two resolvent commuting operators in a $UMD$ space such that one admits a bounded $H^{\infty}$-calculus and the other one has bounded imaginary powers is closed and invertible, provided that the standard parabolic condition between the corresponding angles is satisfied. 

The class of $\mathcal{E}$-sectorial operators behaves nicely in relatively small perturbations as we can see by the following.

\begin{proposition}\label{perp}
Let $E$ be a complex Banach space and $A\in\mathcal{E}(\theta)$ with sectorial bound equal to $M$ and $\mathcal{E}$-sectorial bound equal to $C_{A,\theta}$. Let also $B$ be a linear operator in $E$ such that $\mathcal{D}(A)\subseteq \mathcal{D}(B)$ and 
$$
\|BA^{-1}\|_{\mathcal{L}(E)}<\min\{\frac{1}{1+C_{A,\theta}},\frac{1}{1+M}\}. 
$$
Then $A+B\in\mathcal{E}(\theta)$ and the $\mathcal{E}$-sectorial bound of $A+B$ is $\leq C_{A,\theta}/(1-(1+C_{A,\theta})\|BA^{-1}\|_{\mathcal{L}(E)})$.
\end{proposition}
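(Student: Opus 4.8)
The plan is to treat $B$ as a bounded multiplicative perturbation of $A$ and to transport the $\mathcal E$-sectoriality of $A$ across the resulting factorization by an iteration. Since $\mathcal D(A)\subseteq\mathcal D(B)$ we have $\mathcal D(A+B)=\mathcal D(A)$ and, on this domain, $A+B+\lambda=(I+B(A+\lambda)^{-1})(A+\lambda)$ for $\lambda\in S_\theta$. First I would record the two elementary estimates that make this work: taking $n=1$ in the definition of $\mathcal E$-boundedness (and using that the $e_k$ are unit vectors in $L^2(\Omega;\mathbb C,d\mu)$) gives $\|\lambda(A+\lambda)^{-1}\|_{\mathcal L(E)}\le C_A$ on $S_\theta\setminus\{0\}$; since $B(A+\lambda)^{-1}=(BA^{-1})\big(I-\lambda(A+\lambda)^{-1}\big)$ this yields $\|B(A+\lambda)^{-1}\|_{\mathcal L(E)}\le q:=(1+C_A)\|BA^{-1}\|_{\mathcal L(E)}<1$. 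Hence $I+B(A+\lambda)^{-1}$ is invertible by a Neumann series with norm $\le(1-q)^{-1}$, so $S_\theta\setminus\{0\}\subset\rho(-(A+B))$ with
\[
\lambda(A+B+\lambda)^{-1}=\big[\lambda(A+\lambda)^{-1}\big]\,S_\lambda,\qquad S_\lambda:=\big(I+B(A+\lambda)^{-1}\big)^{-1},
\]
while $A+B=(I+BA^{-1})A$ is closed, densely defined and invertible. Combined with the resolvent bound inherited from $A\in\mathcal P(\theta)$, this gives $A+B\in\mathcal P(\theta)$, and it remains to prove the $\mathcal E$-sectorial estimate.

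To that end fix a triple $(n,\{x_k\},\{\lambda_k(A+B+\lambda_k)^{-1}\})$ with $\lambda_k\in S_\theta\setminus\{0\}$, and set $R_k:=\lambda_k(A+\lambda_k)^{-1}$, an element of the $\mathcal E$-bounded family $\mathcal F=\{\lambda(A+\lambda)^{-1}:\lambda\in S_\theta\setminus\{0\}\}$, and $S_k:=S_{\lambda_k}$, so $\lambda_k(A+B+\lambda_k)^{-1}x_k=R_kS_kx_k$. Applying the $\mathcal E$-boundedness of $\mathcal F$ to the vectors $S_kx_k$ produces coefficients in $\mathbb D$ with $\|\sum_k e_kR_kS_kx_k\|_{L^2(\Omega;E,d\mu)}\le C_A\,P_1$, where in general $P_m:=\|\sum_k b^{(m)}_k e_kS_kx_k\|_{L^2(\Omega;E,d\mu)}$ and $b^{(m)}_k\in\mathbb D$ denotes the running product of the disc coefficients produced up to step $m$. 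The computational heart of the argument is the identity $S_k=I-B(A+\lambda_k)^{-1}S_k$ together with $B(A+\lambda_k)^{-1}=(BA^{-1})(I-R_k)$: for any $b_k\in\mathbb D$, pulling the fixed bounded operator $BA^{-1}$ through the Bochner sums gives
\[
\sum_k b_k e_kS_kx_k=\sum_k b_k e_kx_k-BA^{-1}\sum_k b_k e_kS_kx_k+BA^{-1}\sum_k b_k e_kR_kS_kx_k .
\]
Taking $L^2$-norms with $b_k=b^{(m)}_k$, bounding the first sum by $X:=\sup_{|c_k|\le1}\|\sum_k c_k e_kx_k\|_{L^2(\Omega;E,d\mu)}$, and applying the $\mathcal E$-boundedness of $\mathcal F$ once more to the last sum (acting on the vectors $b^{(m)}_kS_kx_k$, which brings in the next disc factor), one is led to the recursion $(1-D)P_m\le X+DC_A\,P_{m+1}$, where $D:=\|BA^{-1}\|_{\mathcal L(E)}$.

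The main obstacle is precisely that each application of the $\mathcal E$-bound reproduces only a norm of the form $\|\sum_k a_k e_k(\cdot)\|$ and not the fixed-coefficient norm $\|\sum_k e_k(\cdot)\|$ needed to close the estimate in one step — in a general Banach space, composing an $\mathcal E$-bounded family with the merely uniformly bounded family $\{S_k\}$ need not stay $\mathcal E$-bounded — so the recursion must be genuinely iterated. This is harmless because the hypothesis $(1+C_A)\|BA^{-1}\|<1$ says exactly that the contraction factor $\beta:=DC_A/(1-D)$ is $<1$, while $(P_m)_m$ is bounded (by $(1-q)^{-1}\sum_k\|x_k\|_E$, using $|b^{(m)}_k|\le1$ and $\|S_k\|\le(1-q)^{-1}$); iterating $P_m\le X/(1-D)+\beta\,P_{m+1}$ and letting the depth tend to infinity gives $P_1\le X\big/\big((1-D)(1-\beta)\big)=X\big/\big(1-(1+C_A)\|BA^{-1}\|\big)$, hence
\[
\Big\|\sum_k e_k\,\lambda_k(A+B+\lambda_k)^{-1}x_k\Big\|_{L^2(\Omega;E,d\mu)}\le\frac{C_A}{1-(1+C_A)\|BA^{-1}\|_{\mathcal L(E)}}\,X .
\]
Finally, since $\mathbb D^n$ is compact and $(c_k)\mapsto\|\sum_k c_k e_kx_k\|_{L^2(\Omega;E,d\mu)}$ is continuous, the supremum $X$ is attained at some $(c_k)\in\mathbb D^n$, which provides the disc coefficients required in the definition of $\mathcal E$-boundedness; this shows $\{\lambda(A+B+\lambda)^{-1}:\lambda\in S_\theta\setminus\{0\}\}$ is $\mathcal E$-bounded with bound $C_A/\big(1-(1+C_A)\|BA^{-1}\|_{\mathcal L(E)}\big)$, i.e. $A+B\in\mathcal E(\theta)$ with the asserted $\mathcal E$-sectorial bound.
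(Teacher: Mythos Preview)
Your argument is correct and is essentially the same approach as the paper's: both construct $(A+B+\lambda)^{-1}$ via the Neumann series for $(I+B(A+\lambda)^{-1})^{-1}$ and then push the $\mathcal E$-boundedness through the resulting product by iterated application, each step absorbing the new disc coefficients into the previous ones and finally using compactness of $\mathbb D^n$ to realize the supremum as a single admissible choice of $(a_k)$. The only cosmetic difference is organizational: the paper first records that $\{A(A+\lambda)^{-1}\}$ is $\mathcal E$-bounded with bound $1+C_A$ and then bounds the $m$-th Neumann term by $C_A\big((1+C_A)\|BA^{-1}\|\big)^m$, summing the geometric series directly, whereas you keep $S_\lambda$ unexpanded and encode the same iteration as the recursion $(1-D)P_m\le X+DC_AP_{m+1}$, which telescopes to the identical bound $C_A/(1-(1+C_A)\|BA^{-1}\|)$.
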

\begin{proof}
Clearly, the family $\{A(A+\lambda)^{-1}\, |\, \lambda \in S_{\theta}\backslash\{0\}\}$ is also $\mathcal{E}$-bounded with $\mathcal{E}$-bound $\leq 1+C_{A,\theta}$. Moreover, $S_{\theta}\subset\rho(-(A+B))$ and the resolvent of $A+B$ is given by the following absolutely converged Neumann series 
$$
(A+B+\lambda)^{-1}=(A+\lambda)^{-1}\sum_{k=0}^{\infty}(-1)^{k}(B(A+\lambda)^{-1})^{k}, \quad \lambda\in S_{\theta}.
$$
By using successively the $\mathcal{E}$-boundedness of $A(A+\lambda)^{-1}$ together with the compactness of $\mathbb{D}$ we obtain the result. 
\end{proof}

Let $E$ be a complex Banach space and $A\in\mathcal{L}(\mathcal{D}(A),E)$ with $0\notin\sigma(A)$. In this situation, instead of using the graph norm, for simplicity we equip $\mathcal{D}(A)$ with $\|A\cdot\|_{E}$. By denoting with $[\cdot,\cdot]$ the commutation operation, we introduce next our basic commutation condition between two sectorial operators.

\begin{condition}\label{c3}
Let $A:\mathcal{D}(A)\rightarrow E$, $B:\mathcal{D}(B)\rightarrow E$ be linear operators in a complex Banach space $E$ such that $A\in\mathcal{P}(\theta_{A})$, $B\in\mathcal{P}(\theta_{B})$ and 
$$
S_{\theta_{B}}\ni \mu\mapsto (B+\mu)^{-1}\in \mathcal{L}(\mathcal{D}(A))
$$
is a well-defined Lebesgue measurable map. Assume that there exist some constants $C>0$ and $\alpha_{j},\beta_{j}\geq0$, $j\in\{1,2,3\}$, such that
$$
\|[(A+\lambda)^{-1},(B+\mu)^{-1}]\|_{\mathcal{L}(X_{j},Y_{j})}\leq\frac{C}{(1+|\lambda|^{\alpha_{j}})(1+|\mu|^{\beta_{j}})} \quad \text{when} \quad (\lambda,\mu)\in S_{\theta_{A}}\times S_{\theta_{B}},
$$
where
\begin{itemize}
\item[{(i)}] $X_{1}=Y_{1}=E$,\quad $\alpha_{1}+\beta_{1}>2$, \quad $\alpha_{1}>0$, $\beta_{1}>0$.
\item[{(ii)}] $X_{2}=E$, $Y_{2}=\mathcal{D}(A)$,\quad $\alpha_{2}+\beta_{2}>1$, \quad $\alpha_{2}\geq0$, $\beta_{2}>0$.
\item[{(iii)}] $X_{3}=Y_{3}=\mathcal{D}(A)$,\quad $\alpha_{3}+\beta_{3}>2$, \quad $\alpha_{3}\geq0$, $\beta_{3}>0$.
\end{itemize}
\end{condition}

Note that the above condition implies density of $\mathcal{D}(A)\cap\mathcal{D}(B)$ in $E$, since by \cite[Proposition 9.4]{KL1} for any $x\in E$ we have that $(\delta B+1)^{-1}(\delta A+1)^{-1}x\rightarrow x$ strongly as $\delta\rightarrow+0$. Moreover, in view e.g. of the following formally written equalities
\begin{eqnarray}\nonumber
\lefteqn{[(A+\lambda)^{-1},(B+\mu)^{-1}]}\\\label{c31}
&=&(A+\lambda)^{-1}(B+\mu)^{-1}[A,B]B^{-(1+\nu)}B(B+\mu)^{-1}B^{\nu}A^{-\eta}A^{\eta}(A+\lambda)^{-1},
\end{eqnarray}
\begin{eqnarray}\nonumber
\lefteqn{A[(A+\lambda)^{-1},(B+\mu)^{-1}]A^{-1}}\\\label{c32}
&=&A(A+\lambda)^{-1}(B+\mu)^{-1}[A,B]B^{-(1+\gamma)}B^{1-\xi}(B+\mu)^{-1}B^{\gamma+\xi}A^{-1}(A+\lambda)^{-1},
\end{eqnarray}
for certain $\nu,\eta,\gamma,\xi\in(0,1)$, by Lemma \ref{l1} we see that Condition \ref{c3} is fulfilled when the commutator $[A,B]$ is of {\em lower order} e.g. this can happen in the case of differential operators. 

Let us now consider one further condition that turns out to be stronger than Condition \ref{c3}. 

\begin{condition}\label{c4}
Let $A:\mathcal{D}(A)\rightarrow E$, $B:\mathcal{D}(B)\rightarrow E$ be linear operators in a complex Banach space $E$ such that $A\in\mathcal{P}(\theta_{A})$, $B\in\mathcal{P}(\theta_{B})$ and
$$
S_{\theta_{B}}\ni \mu\mapsto (B+\mu)^{-1}\in \mathcal{L}(\mathcal{D}(A))
$$
is a well-defined Lebesgue measurable map. Assume that there exist some constants $C>0$ and $\alpha_{1}, \beta_{1}, \beta_{2}>0$, $\alpha_{2}\geq0$ with $\alpha_{1}+\beta_{1}>2$ and $ \alpha_{2}+\beta_{2}>1$, such that for all $(\lambda,\mu)\in S_{\theta_{A}}\times S_{\theta_{B}}$ we have
$$
\|[(A+\lambda)^{-1},(B+\mu)^{-1}]\|_{\mathcal{L}(E)}\leq \frac{C}{(1+|\lambda|^{\alpha_{1}})(1+|\mu|^{\beta_{1}})}
$$
and
$$
\|(A+\lambda)^{-1}[(B+\mu)^{-1},A]\|_{\mathcal{L}(\mathcal{D}(A))}\leq \frac{C}{(1+|\lambda|^{\alpha_{2}})(1+|\mu|^{\beta_{2}})}.
$$
\end{condition} 

In view of the identity
\begin{gather}\label{com1}
[(A+\lambda)^{-1},(B+\mu)^{-1}]=(A+\lambda)^{-1}[(B+\mu)^{-1},A](A+\lambda)^{-1},
\end{gather}
where $(\lambda,\mu)\in S_{\theta_{A}}\times S_{\theta_{B}}$, we obtain the following.

\begin{remark}\label{rcomp}
Condition \ref{c4} implies Condition \ref{c3}. However, Condition \ref{c4} does not imply the Da Prato-Grisvard condition \cite[(3.1)]{PS} or the Labbas-Terreni condition \cite[(3.2)]{PS}. Moreover, condition \cite[(3.1)]{PS} or condition \cite[(3.2)]{PS} implies the case (i) of Condition \ref{c3}, which is an underlying commutation condition that appears in \cite{MP} and \cite{PS}, as well as in our calculations. By \eqref{com1}, condition \cite[(3.1)]{PS} also implies the case (ii) of Condition \ref{c3}. Finally, from \eqref{com1} we see that the cases (ii) and (iii) of Condition \ref{c3} are of similar flexibility.
\end{remark}

We recall next the Da Prato and Grisvard formula for the inverse of the closure of the sum of two resolvent commuting sectorial operators.

\begin{notation}
Let $E$ be a complex Banach space and $A$, $B$ be linear operators in $E$ such that $A\in\mathcal{P}(\theta_{A})$ and $B\in\mathcal{P}(\theta_{B})$ with $\theta_{A}+\theta_{B}>\pi$. Let $\psi\in[0,\pi-\max\{\theta_{A},\theta_{B}\})$, $c\in S_{\psi}$, and consider the bounded operators $K_{c}, L_{c}\in\mathcal{L}(E)$ defined by
\begin{gather*}
K_{c}=\frac{1}{2\pi i}\int_{\Gamma_{\theta_{B}}}(A-z)^{-1}(B_{c}+z)^{-1}dz \quad \text{and} \quad L_{c}=\frac{1}{2\pi i}\int_{\Gamma_{\theta_{B}}}(B_{c}+z)^{-1}(A-z)^{-1}dz,
\end{gather*}
where $B_{c}=B+c$. By a sectoriality area extension argument we can replace the path $\Gamma_{\theta_{B}}$ in the above formulas by $\Gamma_{\rho,\theta_{B}}$ or by $\pm\delta+\Gamma_{\theta_{B}-\varepsilon}$, for sufficiently small $\rho,\delta,\varepsilon>0$.
\end{notation}

\begin{remark}\label{r6}
In the definition of $K_{c}$, if we keep $\theta_{B}$ fixed and replace $B$ by a family of operators $B(\xi)\in \mathcal{P}(\theta_{B})$, $\xi\in \Xi$, indexed by a set $\Xi$, such that the sectorial bounds of $B(\xi)$ are uniformly bounded in $\xi\in \Xi$, then we can still replace $\Gamma_{\theta_{B}}$ by $\Gamma_{\rho,\theta_{B}}$ for some fixed $\rho>0$ independent of $\xi$. 
\end{remark}

We recall the following mapping property of the operator $K_{c}$. 

\begin{lemma}\label{l32}
Let $E$ be a complex Banach space and $A$, $B$ be linear operators in $E$ such that $A\in\mathcal{P}(\theta_{A})$ and $B\in\mathcal{P}(\theta_{B})$ with $\theta_{A}+\theta_{B}>\pi$. Then, the operator $K_{c}$ maps $\mathcal{D}(B)$ to $\mathcal{D}(A)$.
\end{lemma}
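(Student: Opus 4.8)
The plan is to exploit the extra decay that appears once $K_c$ is applied to a vector of $\mathcal{D}(B)$. Fix $x\in\mathcal{D}(B)=\mathcal{D}(B_c)$ and work with the path $\Gamma_{\rho,\theta_B}$ for a fixed small $\rho>0$, which is admissible by the sectoriality extension argument and which keeps us away from $z=0$. Since $z\neq0$ on this path, the elementary identity $(B_c+z)^{-1}x=z^{-1}x-z^{-1}(B_c+z)^{-1}B_cx$ holds, and substituting it into the defining integral splits
$$
K_cx=\frac{1}{2\pi i}\int_{\Gamma_{\rho,\theta_B}}z^{-1}(A-z)^{-1}x\,dz-\frac{1}{2\pi i}\int_{\Gamma_{\rho,\theta_B}}z^{-1}(A-z)^{-1}(B_c+z)^{-1}B_cx\,dz=:I_1-I_2.
$$
Because $\theta_A+\theta_B>\pi$, every $z\in\Gamma_{\rho,\theta_B}$ satisfies $-z\in S_{\theta_A}$, so $(1+|z|)\|(A-z)^{-1}\|_{\mathcal{L}(E)}$ stays bounded along the path; hence the integrand of $I_1$ is $O(|z|^{-2})$ and that of $I_2$ is $O(|z|^{-3})$, and both integrals converge absolutely in $E$.

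Next I would show that $I_1=0$. The $E$-valued map $z\mapsto z^{-1}(A-z)^{-1}x$ is holomorphic on $\mathbb{C}\setminus(\{0\}\cup\sigma(A))$. Since $A$ is invertible (because $0\in S_{\theta_A}\subseteq\rho(-A)$) and $\sigma(A)$ is contained in a sector about the positive real axis of half-angle $\pi-\theta_A<\theta_B$, this domain of holomorphy contains the path $\Gamma_{\rho,\theta_B}$ together with the whole unbounded region $D=\{z:|z|>\rho\}\cap\{z:|\arg z|>\theta_B\}$ it bounds. As the integrand is $O(|z|^{-2})$ on $\overline D$, closing $\Gamma_{\rho,\theta_B}$ by a large circular arc whose contribution tends to $0$ and invoking Cauchy's theorem on $D$ (which contains no singularity of the integrand) gives $\int_{\Gamma_{\rho,\theta_B}}z^{-1}(A-z)^{-1}x\,dz=0$; in particular $I_1=0\in\mathcal{D}(A)$.

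For the second integral I would differentiate under the integral sign. Using $A(A-z)^{-1}=I+z(A-z)^{-1}$ one gets
$$
A\big(z^{-1}(A-z)^{-1}(B_c+z)^{-1}B_cx\big)=z^{-1}(B_c+z)^{-1}B_cx+(A-z)^{-1}(B_c+z)^{-1}B_cx,
$$
and along $\Gamma_{\rho,\theta_B}$ both summands on the right are $O(|z|^{-2})$, hence Bochner integrable. Since $A$ is closed, the standard interchange property (if $f$ and $Af$ are integrable $\mathcal{D}(A)$-valued maps, then $\int f\in\mathcal{D}(A)$ and $A\int f=\int Af$) shows that $I_2\in\mathcal{D}(A)$. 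Combining this with $I_1=0$ yields $K_cx=I_1-I_2\in\mathcal{D}(A)$, which is the assertion.

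The only genuinely delicate step is the vanishing of $I_1$: one must check carefully that the contour and the region it encloses avoid both the origin and $\sigma(A)$ — this is precisely where $\theta_A+\theta_B>\pi$ and the invertibility of $A$ are used — and observe that the two summands $z^{-1}x$ and $(A-z)^{-1}x$ produced by a formal application of $A$ are each only $O(|z|^{-1})$ and hence not separately integrable, so the cancellation must be kept inside $I_1$ rather than performed termwise. Everything else is routine contour bookkeeping together with the closedness of $A$.
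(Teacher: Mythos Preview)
Your proof is correct and takes a genuinely different route from the paper's. The paper computes $K_c B^{w}$ for $\mathrm{Re}(w)<0$: it inserts the Dunford integral for $B^{w}$, expands via the resolvent identity $(B_c+z)^{-1}(B+\lambda)^{-1}=(\lambda-c-z)^{-1}\big((B_c+z)^{-1}-(B+\lambda)^{-1}\big)$, swaps the order of integration by Fubini, and kills one of the two resulting double integrals by Cauchy's theorem, arriving at the single-contour formula
\[
K_c B^{w}=\frac{1}{2\pi i}\int_{-\delta+\Gamma_{\theta_B}}(-\lambda)^{w}(A+c-\lambda)^{-1}(B+\lambda)^{-1}\,d\lambda,
\]
after which absolute convergence of $A$ applied to the integrand shows $K_c B^{w}:E\to\mathcal{D}(A)$, hence $K_c:\mathcal{D}(B)\to\mathcal{D}(A)$. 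Your argument instead stays on the original $z$-contour, uses the pointwise identity $(B_c+z)^{-1}x=z^{-1}x-z^{-1}(B_c+z)^{-1}B_cx$ for $x\in\mathcal{D}(B)$, and removes the troublesome $z^{-1}x$ piece by a single Cauchy deformation in the sector to the left. This is more elementary---no complex powers, no Fubini, no auxiliary contour---and is arguably the natural proof of the lemma in isolation. What the paper's approach buys is methodological uniformity: exactly the same double-contour manipulation is used to derive the representations of $A^{w}K_c$ and $L_cA^{w}$ on which Propositions~\ref{t1} and~\ref{t12} rest.
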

\begin{proof}
If $w\in\mathbb{C}$ with $\mathrm{Re}(w)<0$ then
\begin{eqnarray*}
\lefteqn{K_{c}B^{w}=\frac{1}{2\pi i}\int_{\Gamma_{\theta_{B}-\varepsilon}}(A-z)^{-1}\big(\frac{1}{2\pi i}\int_{-\delta+\Gamma_{\theta_{B}}}(-\lambda)^{w}(B_{c}+z)^{-1}(B+\lambda)^{-1}d\lambda\big)dz}\\
&=&\frac{1}{2\pi i}\int_{\Gamma_{\theta_{B}-\varepsilon}}(A-z)^{-1}\big(\frac{1}{2\pi i}\int_{-\delta+\Gamma_{\theta_{B}}}(-\lambda)^{w}(\lambda-c-z)^{-1}((B_{c}+z)^{-1}-(B+\lambda)^{-1})d\lambda\big)dz\\
&=&\frac{1}{(2\pi i)^{2}}\int_{\Gamma_{\theta_{B}-\varepsilon}}\int_{-\delta+\Gamma_{\theta_{B}}}(-\lambda)^{w}(\lambda-c-z)^{-1}(A-z)^{-1}(B_{c}+z)^{-1}d\lambda dz\\
&&-\frac{1}{(2\pi i)^{2}}\int_{-\delta+\Gamma_{\theta_{B}}}\int_{\Gamma_{\theta_{B}-\varepsilon}}(-\lambda)^{w}(\lambda-c-z)^{-1}(A-z)^{-1}(B+\lambda)^{-1}dzd\lambda,
\end{eqnarray*}
where we have used Fubini's theorem. By Cauchy's theorem the first term on the right hand side of the above equation is zero. Therefore, 
\begin{gather}\label{ef}
K_{c}B^{w}=\frac{1}{2\pi i}\int_{-\delta+\Gamma_{\theta_{B}}}(-\lambda)^{w}(A+c-\lambda)^{-1}(B+\lambda)^{-1}d\lambda.
\end{gather}
Since the integral 
\begin{gather*}
\int_{-\delta+\Gamma_{\theta_{B}}}(-\lambda)^{w}A(A+c-\lambda)^{-1}(B+\lambda)^{-1}d\lambda
\end{gather*}
converges absolutely, by \eqref{ef} we have that $K_{c}B^{w}$ maps $E$ to $\mathcal{D}(A)$.
\end{proof}

Finally, we recall the following commutation formula.

\begin{lemma}\label{com}
Let $E$ be a complex Banach space and $A$, $B$ be linear operators in $E$ with $A\in\mathcal{P}(\theta_{A})$. Then, for any $f\in H_{0}^{\infty}(\phi)$, $\phi\in[0,\theta_{A})$, and any $\lambda\in \rho(-B)\neq \emptyset$ we have that
$$
[f(-A),(B+\lambda)^{-1}]=Q_{f}(\lambda),
$$
where
$$
Q_{f}(\lambda)=\frac{1}{2\pi i}\int_{\Gamma_{\theta_{A}}}f(z)[(A+z)^{-1},(B+\lambda)^{-1}]dz \in\mathcal{L}(E).
$$
\end{lemma}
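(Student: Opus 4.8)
The plan is to verify the identity $[f(-A),(B+\lambda)^{-1}]=Q_{f}(\lambda)$ directly from the Dunford integral representation of $f(-A)$, using the resolvent of $A$ as the integrand and commuting it past $(B+\lambda)^{-1}$ term by term. Since $f\in H_{0}^{\infty}(\phi)$, the representation
$$
f(-A)=\frac{1}{2\pi i}\int_{\Gamma_{\theta_{A}}}f(z)(A+z)^{-1}\,dz
$$
converges absolutely in $\mathcal{L}(E)$, because the decay assumption $|f(z)|\leq c(|z|/(1+|z|^{2}))^{\eta}$ together with the sectorial bound $\|(A+z)^{-1}\|_{\mathcal{L}(E)}\leq\kappa/(1+|z|)$ makes the integrand $O(|z|^{-1-\eta})$ near infinity and integrable near $z=0$ along $\Gamma_{\theta_{A}}$. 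Hence $f(-A)$ is a genuine Bochner integral of bounded operators.

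The key step is then: for fixed $\lambda\in\rho(-B)$, left- and right-multiplication by the bounded operator $(B+\lambda)^{-1}$ are continuous linear maps on $\mathcal{L}(E)$, so they pass inside the Bochner integral. Thus
$$
(B+\lambda)^{-1}f(-A)=\frac{1}{2\pi i}\int_{\Gamma_{\theta_{A}}}f(z)(B+\lambda)^{-1}(A+z)^{-1}\,dz
$$
and similarly for $f(-A)(B+\lambda)^{-1}$. Subtracting gives
$$
[f(-A),(B+\lambda)^{-1}]=\frac{1}{2\pi i}\int_{\Gamma_{\theta_{A}}}f(z)\,[(A+z)^{-1},(B+\lambda)^{-1}]\,dz,
$$
which is exactly $Q_{f}(\lambda)$. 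It remains to confirm that the right-hand integral itself converges absolutely in $\mathcal{L}(E)$, so that $Q_{f}(\lambda)$ is well defined as claimed: this follows because $\|[(A+z)^{-1},(B+\lambda)^{-1}]\|_{\mathcal{L}(E)}\leq \|(A+z)^{-1}\|_{\mathcal{L}(E)}\big(\|(B+\lambda)^{-1}\|_{\mathcal{L}(E)}+\|(B+\lambda)^{-1}\|_{\mathcal{L}(E)}\big)$ is again $O(1/(1+|z|))$, so the integrand is bounded by a constant times $|f(z)|/(1+|z|)$, which is integrable along $\Gamma_{\theta_{A}}$ by the same estimate as above.

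I do not expect a genuine obstacle here; the statement is essentially bookkeeping about interchanging a bounded operator with a Bochner integral. The only point requiring a little care is the justification that the contour $\Gamma_{\theta_{A}}$ (with apex at the origin) is a legitimate path of integration: since $A\in\mathcal{P}(\theta_{A})$ we may, by the sectoriality extension argument recalled after the first definition, assume $\theta_{A}>0$ and deform to $\Gamma_{\rho,\theta_{A}}$ or to a shifted contour if one prefers to avoid $z=0$ — but the stated decay of $f$ near the origin already makes this unnecessary. No assumption on $B$ beyond $\rho(-B)\neq\emptyset$ is used, and no commutation between $A$ and $B$ is invoked, consistent with the intended later application where the commutator is small but nonzero.
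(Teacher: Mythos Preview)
Your argument is correct and is essentially the same as the paper's own proof: the paper writes out $f(-A)(B+\lambda)^{-1}$ via the Dunford integral, passes $(B+\lambda)^{-1}$ inside, and subtracts to obtain $Q_{f}(\lambda)$. You supply somewhat more detail on absolute convergence than the paper does, but the route is identical.
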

\begin{proof}
Follows directly by the integral formula for the functional calculus of $A$.
\end{proof}

\section{The sum of non-commuting operators}

In this section we consider sums of possibly non-commuting operators satisfying Condition \ref{c3} and show closedness and invertibility. Firstly, we perturb the Da Prato and Grisvard formula from the left in order to construct an unbounded left inverse of the sum. Then, we do similar perturbation from the right and construct an approximation of the right inverse. Finally, by employing the extra properties of the boundedness of the $H^\infty$-calculus and the $ON$-sectoriality we show that the above two constructions give the inverse of the sum. We start by applying certain fractional powers to the Da Prato and Grisvard formula as follows.

\begin{proposition}\label{t1}
Let $E$ be a complex Banach space and $A$, $B$ be linear operators in $E$ such that $A\in\mathcal{P}(\theta_{A})$ and $B\in\mathcal{P}(\theta_{B})$ with $\theta_{A}+\theta_{B}>\pi$. If Condition \ref{c3} is satisfied, then the operator $K_{c}$ maps the range $\mathrm{Ran}(A+B_{c})$ to $\mathcal{D}(A)$ and there exists some $P_{c} \in \mathcal{L}(E)$ such that 
$$
AK_{c}(A+B_{c})=(I+P_{c})A \quad \text{in} \quad \mathcal{D}(A+B).
$$
Furthermore, $\|P_{c}\|_{\mathcal{L}(E)}\rightarrow0$ when $|c|\rightarrow\infty$.
\end{proposition}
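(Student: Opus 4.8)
\emph{Approach and Step 1 (the perturbed formula).} The plan is to insert the resolvent decomposition $A+B_c=(A-z)+(B_c+z)$ into the integrand of $K_c(A+B_c)x$, commute the two resolvents past each other, and collect the result into a \emph{commutator--free} part — which the classical Da Prato--Grisvard theory evaluates — plus two integrals built from the resolvent commutator $[(A-z)^{-1},(B_c+z)^{-1}]$, which Condition~\ref{c3} is designed to control. Because $\theta_A+\theta_B>\pi$ forces $\omega<\min\{\theta_A,\theta_B\}$, $B_c=B+c\in\mathcal{P}(\theta_B)$ with sectorial bound uniform in $c\in S_\omega$, so $K_c\in\mathcal{L}(E)$ and Lemma~\ref{l32} applies. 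Fix $x\in\mathcal{D}(A+B)=\mathcal{D}(A)\cap\mathcal{D}(B)$. Writing $(A+B_c)x=Ax+B_cx$ in the integrand, using $(A-z)^{-1}Ax=x+z(A-z)^{-1}x$, $(B_c+z)^{-1}B_cx=x-z(B_c+z)^{-1}x$, and the algebraic identity $(A-z)^{-1}(B_c+z)^{-1}=(B_c+z)^{-1}(A-z)^{-1}+[(A-z)^{-1},(B_c+z)^{-1}]$, a short computation gives, for every $z\in\Gamma_{\theta_B}$,
\begin{multline*}
(A-z)^{-1}(B_c+z)^{-1}(A+B_c)x=\big[(B_c+z)^{-1}+(A-z)^{-1}\big]x\\
{}+[(A-z)^{-1},(B_c+z)^{-1}]Ax-z\,[(A-z)^{-1},(B_c+z)^{-1}]x .
\end{multline*}
Integrating against $\tfrac{1}{2\pi i}\,dz$ over $\Gamma_{\theta_B}$ decomposes $K_c(A+B_c)x$ accordingly.

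\emph{Step 2 (the commutator--free part).} One has $\tfrac{1}{2\pi i}\int_{\Gamma_{\theta_B}}\big[(B_c+z)^{-1}+(A-z)^{-1}\big]x\,dz=x$. This is the classical Da Prato--Grisvard identity and is where $\theta_A+\theta_B>\pi$ enters; note that the two summands are individually only improperly integrable (each has norm $O(|z|^{-1})$ while their sum is $o(|z|^{-1})$), so to make the contour manipulations rigorous one first applies a small fractional power $A^{\rho}$, $\rho\in(0,1)$: by Lemma~\ref{l1} the operator $A^{\rho}K_c$ is given by an absolutely convergent Bochner integral, all the identities above go through verbatim for $A^{\rho}K_c(A+B_c)x$, and $A^{\rho}$ is then removed using its injectivity. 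This is the ``fractional powers applied to the Da Prato--Grisvard formula'' announced in the introduction.

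\emph{Step 3 (the commutator part, and conclusion).} Along $\Gamma_{\theta_B}$ we apply Condition~\ref{c3} with $\lambda=-z$ and $\mu=z+c$, both of modulus $\gtrsim|z|$ (indeed $|z+c|\gtrsim|z|+|c|$, since $\theta_A+\theta_B>\pi$ keeps $\Gamma_{\theta_B}$ uniformly off the half--line through $-c$). Part~(i), with $\alpha_1+\beta_1>2$, yields absolute convergence in $E$ of both $\int_{\Gamma_{\theta_B}}[(A-z)^{-1},(B_c+z)^{-1}]Ax\,dz$ and $\int_{\Gamma_{\theta_B}}z\,[(A-z)^{-1},(B_c+z)^{-1}]x\,dz$. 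Part~(ii), with $\alpha_2+\beta_2>1$, shows the first integral lies in $\mathcal{D}(A)$, and applying $A$ to it gives $\big(\tfrac{1}{2\pi i}\int_{\Gamma_{\theta_B}}A[(A-z)^{-1},(B_c+z)^{-1}]\,dz\big)Ax$; and — crucially — since here the argument $x$ is itself in $\mathcal{D}(A)$, part~(iii), with $\alpha_3+\beta_3>2$, shows the second integral lies in $\mathcal{D}(A)$, with $A$ applied to it equal to $\big(\tfrac{1}{2\pi i}\int_{\Gamma_{\theta_B}}z\,A[(A-z)^{-1},(B_c+z)^{-1}]A^{-1}\,dz\big)Ax$. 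Together with Step~2 this shows $K_c(A+B_c)x\in\mathcal{D}(A)$ (in particular $K_c$ maps $\mathrm{Ran}(A+B_c)$ into $\mathcal{D}(A)$), and $AK_c(A+B_c)x=(I+P_c)Ax$ with
\begin{multline*}
P_c=\frac{1}{2\pi i}\int_{\Gamma_{\theta_B}}A[(A-z)^{-1},(B_c+z)^{-1}]\,dz\\
-\frac{1}{2\pi i}\int_{\Gamma_{\theta_B}}z\,A[(A-z)^{-1},(B_c+z)^{-1}]A^{-1}\,dz\ \in\ \mathcal{L}(E),
\end{multline*}
boundedness of the two operator integrals following from the same estimates. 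Finally, each integrand above is dominated by $C|z|^{a}\big((1+|z|^{\alpha})(1+|z+c|^{\beta})\big)^{-1}$ with $a\in\{0,1\}$ and $\alpha+\beta>a+1$; splitting $\Gamma_{\theta_B}$ at $|z|=|c|/2$ and using $|z+c|\gtrsim|z|+|c|$ bounds each of the two integrals by $O\!\big(|c|^{\,a+1-\alpha-\beta}\big)$, so $\|P_c\|_{\mathcal{L}(E)}\to0$ as $|c|\to\infty$.

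\emph{Main obstacle.} The delicate point is the borderline convergence scattered through the argument: the commutator--free identity rests on a leading--order cancellation between $(B_c+z)^{-1}$ and $(A-z)^{-1}$ (which is why the fractional--power regularization is needed), and the $z$--weighted commutator integral lands in $\mathcal{D}(A)$ only because part~(iii) of Condition~\ref{c3} can be invoked on the input $x\in\mathcal{D}(A)$ to offset the extra factor $z$ — a power of $|z|$ that parts~(i)--(ii) alone cannot afford. Getting all three exponent thresholds to match is exactly why the commutator hypothesis is stated in three parts of equal strength.
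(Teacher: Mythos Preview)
Your argument follows the same line as the paper's: a pointwise resolvent identity on $\mathcal{D}(A)\cap\mathcal{D}(B)$ splits $K_c(A+B_c)x$ into a commutator--free part that evaluates to $x$ and two commutator integrals controlled by parts (i)--(iii) of Condition~\ref{c3}, yielding the same formula for $P_c$ and the same decay estimate.

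One technical point in Step~2 deserves correction. You regularize with a \emph{positive} fractional power $A^{\rho}$, $\rho\in(0,1)$, and invoke Lemma~\ref{l1}. But Lemma~\ref{l1} only gives $\|A^{\rho}(A-z)^{-1}\|\lesssim (1+|z|^{\eta})^{-1}$ with $\eta<1-\rho<1$, so $\int_{\Gamma_{\theta_B}}A^{\rho}(A-z)^{-1}x\,dz$ still fails to converge absolutely; and $A^{\rho}(B_c+z)^{-1}x$ has no tractable form (the positive power does not commute past the $B$--resolvent), so neither summand of the commutator--free part becomes individually integrable. The paper instead applies a \emph{negative} power $A^{-\theta}$ from the left: a Fubini/Cauchy argument turns $A^{-\theta}K_c$ into $\frac{1}{2\pi i}\int_{\Gamma_{\theta_B}}\lambda^{-\theta}(A-\lambda)^{-1}(B_c+\lambda)^{-1}\,d\lambda$, so the regularizing factor becomes the \emph{scalar} weight $\lambda^{-\theta}$ inside the integral. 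Now both $\int\lambda^{-\theta}(A-\lambda)^{-1}x\,d\lambda$ and $\int\lambda^{-\theta}(B_c+\lambda)^{-1}x\,d\lambda$ converge absolutely and sum to $A^{-\theta}x$; one then lets $\theta\to0$ using the strong continuity $A^{-\theta}\to I$ and dominated convergence (rather than injectivity). With this fix your proof is essentially identical to the paper's.
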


\begin{proof}
Let $w\in\mathbb{C}$ with $\mathrm{Re}(w)<0$. We have
\begin{eqnarray*}
\lefteqn{A^{w}K_{c}=\frac{1}{2\pi i}\int_{\Gamma_{\theta_{B}}}A^{w}(A-z)^{-1}(B_{c}+z)^{-1}dz}\\
&=&\frac{1}{2\pi i}\int_{\Gamma_{\theta_{B}}}\big(\frac{1}{2\pi i}\int_{-\delta+\Gamma_{\theta_{A}}}(-\lambda)^{w}(A+\lambda)^{-1}(A-z)^{-1}d\lambda\big)(B_{c}+z)^{-1}dz\\
&=&\frac{1}{(2\pi i)^{2}}\int_{\Gamma_{\theta_{B}}}\big(\int_{-\delta+\Gamma_{\theta_{A}}}(-\lambda)^{w}(\lambda+z)^{-1}((A-z)^{-1}-(A+\lambda)^{-1})d\lambda\big)(B_{c}+z)^{-1}dz\\
&=&\frac{1}{(2\pi i)^{2}}\int_{\Gamma_{\theta_{B}}}\int_{-\delta+\Gamma_{\theta_{A}}}(-\lambda)^{w}(\lambda+z)^{-1}(A-z)^{-1}(B_{c}+z)^{-1}d\lambda dz\\
&&-\frac{1}{(2\pi i)^{2}}\int_{-\delta+\Gamma_{\theta_{A}}}\int_{\Gamma_{\theta_{B}}}(-\lambda)^{w}(\lambda+z)^{-1}(A+\lambda)^{-1}(B_{c}+z)^{-1}dzd\lambda,
\end{eqnarray*}
where at the last step we have used Fubini's theorem. By Cauchy's theorem, the first term on the right hand side of the above equation is zero. Therefore
\begin{gather*}
A^{w}K_{c}=\frac{1}{2\pi i}\int_{-\delta+\Gamma_{\theta_{A}}}(-\lambda)^{w}(A+\lambda)^{-1}(B_{c}-\lambda)^{-1}d\lambda.
\end{gather*}
Hence, if $\theta\in(0,1)$ by Cauchy's theorem we obtain
\begin{gather}\label{aak}
A^{-\theta}K_{c}=\frac{1}{2\pi i}\int_{\Gamma_{\theta_{B}}}\lambda^{-\theta}(A-\lambda)^{-1}(B_{c}+\lambda)^{-1}d\lambda.
\end{gather}

If $x\in\mathcal{D}(A+B)$, then we estimate 
\begin{eqnarray*}
\lefteqn{A^{-\theta}K_{c}(A+B_{c})x=\frac{1}{2\pi i}\int_{\Gamma_{\theta_{B}}}\lambda^{-\theta}(A-\lambda)^{-1}B_{c}(B_{c}+\lambda)^{-1}xd\lambda}\\
&&+\frac{1}{2\pi i}\int_{\Gamma_{\theta_{B}}}\lambda^{-\theta}(B_{c}+\lambda)^{-1}A(A-\lambda)^{-1}xd\lambda+\frac{1}{2\pi i}\int_{\Gamma_{\theta_{B}}}\lambda^{-\theta}[(A-\lambda)^{-1},(B_{c}+\lambda)^{-1}]Axd\lambda\\
&=&\frac{1}{2\pi i}\int_{\Gamma_{\theta_{B}}}\lambda^{-\theta}(A-\lambda)^{-1}xd\lambda-\frac{1}{2\pi i}\int_{\Gamma_{\theta_{B}}}\lambda^{1-\theta}(A-\lambda)^{-1}(B_{c}+\lambda)^{-1}xd\lambda\\
&&+\frac{1}{2\pi i}\int_{\Gamma_{\theta_{B}}}\lambda^{-\theta}(B_{c}+\lambda)^{-1}xd\lambda+\frac{1}{2\pi i}\int_{\Gamma_{\theta_{B}}}\lambda^{1-\theta}(B_{c}+\lambda)^{-1}(A-\lambda)^{-1}xd\lambda\\
&&+\frac{1}{2\pi i}\int_{\Gamma_{\theta_{B}}}\lambda^{-\theta}[(A-\lambda)^{-1},(B_{c}+\lambda)^{-1}]Axd\lambda\\
&=&A^{-\theta}x-\frac{1}{2\pi i}\int_{\Gamma_{\theta_{B}}}\lambda^{1-\theta}[(A-\lambda)^{-1},(B_{c}+\lambda)^{-1}]xd\lambda+\frac{1}{2\pi i}\int_{\Gamma_{\theta_{B}}}\lambda^{-\theta}[(A-\lambda)^{-1},(B_{c}+\lambda)^{-1}]Axd\lambda.
\end{eqnarray*}

By taking the limit in the above equation as $\theta\rightarrow 0$, since $A^{-\theta}\rightarrow I$ strongly, by the dominated convergence theorem we obtain
\begin{eqnarray}\nonumber
\lefteqn{K_{c}(A+B_{c})x=x}\\\label{redo}
&&-\frac{1}{2\pi i}\int_{\Gamma_{\theta_{B}}}\lambda[(A-\lambda)^{-1},(B_{c}+\lambda)^{-1}]xd\lambda+\frac{1}{2\pi i}\int_{\Gamma_{\theta_{B}}}[(A-\lambda)^{-1},(B_{c}+\lambda)^{-1}]Axd\lambda,
\end{eqnarray}
where we have employed Condition \ref{c3} for the existence of the dominant and for the absolute convergence of the above integrals. 

Since by Condition \ref{c3} the integrals 
$$
\int_{\Gamma_{\theta_{B}}}\lambda A[(A-\lambda)^{-1},(B_{c}+\lambda)^{-1}]xd\lambda \quad \text{and} \quad \int_{\Gamma_{\theta_{B}}}A[(A-\lambda)^{-1},(B_{c}+\lambda)^{-1}]Axd\lambda
$$
converge absolutely, \eqref{redo} implies that $K_{c}(A+B_{c})$ maps $\mathcal{D}(A+B)$ to $\mathcal{D}(A)$ and 
\begin{eqnarray}\nonumber
\lefteqn{AK_{c}(A+B_{c})x}\\\label{ktm}
&=&\big(I-\frac{1}{2\pi i}\int_{\Gamma_{\theta_{B}}}\lambda A[(A-\lambda)^{-1},(B_{c}+\lambda)^{-1}]A^{-1}d\lambda+\frac{1}{2\pi i}\int_{\Gamma_{\theta_{B}}}A[(A-\lambda)^{-1},(B_{c}+\lambda)^{-1}]d\lambda\big)Ax
\end{eqnarray}
for all $x\in\mathcal{D}(A+B)$. 

Furthermore, by Condition \ref{c3}, the norm of 
\begin{eqnarray}\nonumber
\lefteqn{P_{c}=-\frac{1}{2\pi i}\int_{\Gamma_{\theta_{B}}}\lambda A[(A-\lambda)^{-1},(B_{c}+\lambda)^{-1}]A^{-1}d\lambda}\\\label{Pc}
&&+\frac{1}{2\pi i}\int_{\Gamma_{\theta_{B}}}A[(A-\lambda)^{-1},(B_{c}+\lambda)^{-1}]d\lambda \in \mathcal{L}(E)
\end{eqnarray}
becomes arbitrary small by taking $|c|$ sufficiently large. 
 \end{proof}

Similarly, we can build an approximation of the right inverse of the sum by applying the Da Prato and Grisvard formula to certain fractional powers as follows.

\begin{proposition}\label{t12}
Let $E$ be a complex Banach space and $A$, $B$ be linear operators in $E$ such that $A\in\mathcal{P}(\theta_{A})$ and $B\in\mathcal{P}(\theta_{B})$ with $\theta_{A}+\theta_{B}>\pi$. If Condition \ref{c3} is satisfied, then the operator $L_{c}$ maps $\mathcal{D}(A)$ to $\mathcal{D}(A+B)$ and there exists some $T_{c} \in \mathcal{L}(E)$ such that
$$
(A+B_{c})L_{c}=I+T_{c} \quad\, \text{in} \quad \mathcal{D}(A).
$$
Furthermore, $\|T_{c}\|_{\mathcal{L}(E)}\rightarrow 0$ when $|c|\rightarrow\infty$.
\end{proposition}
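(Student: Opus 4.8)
The plan is to follow the proof of Proposition \ref{t1} with the roles of left and right interchanged, i.e.\ to \emph{perturb the Da Prato--Grisvard formula from the right} by regularizing $L_c$ with a small negative power $A^{-\theta}$, $\theta\in(0,1)$, composed on the right. First I would repeat the Fubini--Cauchy computation of that proof, now with the factor $A^w$ merging into the $(A-z)^{-1}$ that sits on the right of $L_c$, obtaining for $\mathrm{Re}(w)<0$ that $L_c A^w=\frac{1}{2\pi i}\int_{-\delta+\Gamma_{\theta_A}}(-\mu)^w(B_c-\mu)^{-1}(A+\mu)^{-1}\,d\mu$, and then, after the same contour deformation as in the step leading to \eqref{aak}, the identity
$$
L_c A^{-\theta}=\frac{1}{2\pi i}\int_{\Gamma_{\theta_B}}\lambda^{-\theta}(B_c+\lambda)^{-1}(A-\lambda)^{-1}\,d\lambda,\qquad\theta\in(0,1).
$$

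Next I would move $A+B_c$ inside this integral. The $B_c$-part is handled by $B_c(B_c+\lambda)^{-1}=I-\lambda(B_c+\lambda)^{-1}$; for the $A$-part I would first swap the two resolvents by writing $(B_c+\lambda)^{-1}(A-\lambda)^{-1}=(A-\lambda)^{-1}(B_c+\lambda)^{-1}-[(A-\lambda)^{-1},(B_c+\lambda)^{-1}]$ (the commutator term maps $E$ into $\mathcal{D}(A)$ by Condition \ref{c3}(ii)), and then apply $A(A-\lambda)^{-1}=I+\lambda(A-\lambda)^{-1}$. The absolute convergence of the resulting integrals---via Condition \ref{c3}(i),(ii) and the resolvent decay $\|(A-\lambda)^{-1}\|_{\mathcal{L}(E)},\|(B_c+\lambda)^{-1}\|_{\mathcal{L}(E)}\lesssim(1+|\lambda|)^{-1}$ on $\Gamma_{\theta_B}$---shows at the same time that $L_c A^{-\theta}$ maps $E$ into $\mathcal{D}(A)\cap\mathcal{D}(B)=\mathcal{D}(A+B)$. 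Collecting the terms and using, exactly as in the proof of Proposition \ref{t1}, the identity $\frac{1}{2\pi i}\int_{\Gamma_{\theta_B}}\lambda^{-\theta}\big((A-\lambda)^{-1}+(B_c+\lambda)^{-1}\big)\,d\lambda=A^{-\theta}$, this should yield, for every $z\in E$,
$$
(A+B_c)L_c A^{-\theta}z=A^{-\theta}z+\frac{1}{2\pi i}\int_{\Gamma_{\theta_B}}\lambda^{1-\theta}[(A-\lambda)^{-1},(B_c+\lambda)^{-1}]z\,d\lambda-\frac{1}{2\pi i}\int_{\Gamma_{\theta_B}}\lambda^{-\theta}A[(A-\lambda)^{-1},(B_c+\lambda)^{-1}]z\,d\lambda.
$$

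Finally I would let $\theta\to0^+$. For $y\in\mathcal{D}(A)$ one has $L_c y=L_c A^{-\theta}(A^\theta y)$ because $A^{-\theta}A^\theta y=y$, so $L_c y\in\mathcal{D}(A+B)$ and $L_c$ indeed maps $\mathcal{D}(A)$ into $\mathcal{D}(A+B)$; applying the displayed formula to $z=A^\theta y$ gives an expression for $(A+B_c)L_c y$ whose left-hand side does not depend on $\theta$, and since $A^\theta y=A^{\theta-1}(Ay)\to y$ as $\theta\to0^+$ (with $\{A^{\theta-1}\}$ uniformly bounded for small $\theta$), dominated convergence---again through Condition \ref{c3}(i),(ii)---produces
$$
(A+B_c)L_c y=(I+T_c)y,\qquad T_c=\frac{1}{2\pi i}\int_{\Gamma_{\theta_B}}\lambda[(A-\lambda)^{-1},(B_c+\lambda)^{-1}]\,d\lambda-\frac{1}{2\pi i}\int_{\Gamma_{\theta_B}}A[(A-\lambda)^{-1},(B_c+\lambda)^{-1}]\,d\lambda\in\mathcal{L}(E),
$$
the boundedness of $T_c$ coming from Condition \ref{c3}(i) ($\alpha_1+\beta_1>2$) for the first term and (ii) ($\alpha_2+\beta_2>1$) for the second. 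That $\|T_c\|_{\mathcal{L}(E)}\to0$ as $|c|\to\infty$ then follows just as for $P_c$ in Proposition \ref{t1}, since $(B_c+\lambda)^{-1}=(B+(c+\lambda))^{-1}$ supplies the extra decay factor $(1+|c+\lambda|^{\beta_j})^{-1}$ and splitting the contour integrals at $|\lambda|\sim|c|$ makes the bounds vanish. I expect the only genuine delicacy to lie in the domain bookkeeping of the middle step---verifying at each manipulation that $A$ or $B_c$ is applied only to vectors that actually lie in its domain, using $(A-\lambda)^{-1}\colon E\to\mathcal{D}(A)$, the hypothesis $(B_c+\lambda)^{-1}\colon\mathcal{D}(A)\to\mathcal{D}(A)$, and Condition \ref{c3}(ii)---since every estimate needed is already present, mutatis mutandis, in the proof of Proposition \ref{t1}.
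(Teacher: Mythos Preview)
Your proposal is correct and follows essentially the same strategy as the paper's proof: regularize $L_c$ by $A^{-\theta}$ on the right, compute $(A+B_c)L_cA^{-\theta}$ by pushing the operators inside the integral and using the resolvent identities together with Condition~\ref{c3}, then pass to the limit $\theta\to0^+$ on $\mathcal{D}(A)$ via $A^\theta y=A^{\theta-1}Ay\to y$.

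The only cosmetic difference is the choice of contour. After obtaining $L_cA^{w}=\frac{1}{2\pi i}\int_{-\delta+\Gamma_{\theta_A}}(-\mu)^{w}(B_c-\mu)^{-1}(A+\mu)^{-1}\,d\mu$, the paper simply stays on $\Gamma_{\theta_A}$ (writing $(-\lambda)^{-\theta}$, $(A+\lambda)^{-1}$, $(B_c-\lambda)^{-1}$), computes $B_cL_cA^{-\theta}$ and $AL_cA^{-\theta}$ separately, and arrives at
\[
T_c=-\frac{1}{2\pi i}\int_{\Gamma_{\theta_A}}\lambda[(A+\lambda)^{-1},(B_c-\lambda)^{-1}]\,d\lambda-\frac{1}{2\pi i}\int_{\Gamma_{\theta_A}}A[(A+\lambda)^{-1},(B_c-\lambda)^{-1}]\,d\lambda,
\]
whereas you deform back to $\Gamma_{\theta_B}$ (mirroring \eqref{aak} exactly) and obtain the equivalent expression over $\Gamma_{\theta_B}$. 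The two representations agree by a change of variable $\lambda\mapsto-\lambda$ and contour deformation, and the convergence and decay arguments are identical.
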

\begin{proof}
We need an analogue of formula \eqref{ktm}. If $w\in\mathbb{C}$ with $\mathrm{Re}(w)<0$, then
\begin{eqnarray*}
\lefteqn{L_{c}A^{w}=\frac{1}{2\pi i}\int_{-\delta+\Gamma_{\theta_{B}}}(B_{c}+z)^{-1}\big(\frac{1}{2\pi i}\int_{-\delta+\Gamma_{\theta_{A}}}(-\lambda)^{w}(A-z)^{-1}(A+\lambda)^{-1}d\lambda\big)dz}\\
&=&\frac{1}{2\pi i}\int_{-\delta+\Gamma_{\theta_{B}}}(B_{c}+z)^{-1}\big(\frac{1}{2\pi i}\int_{-\delta+\Gamma_{\theta_{A}}}(-\lambda)^{w}(\lambda+z)^{-1}((A-z)^{-1}-(A+\lambda)^{-1})d\lambda\big)dz\\
&=&(\frac{1}{2\pi i})^{2}\int_{-\delta+\Gamma_{\theta_{B}}}\int_{-\delta+\Gamma_{\theta_{A}}}(-\lambda)^{w}(\lambda+z)^{-1}(B_{c}+z)^{-1}(A-z)^{-1}d\lambda dz\\
&&-(\frac{1}{2\pi i})^{2}\int_{-\delta+\Gamma_{\theta_{A}}}\int_{-\delta+\Gamma_{\theta_{B}}}(-\lambda)^{w}(\lambda+z)^{-1}(B_{c}+z)^{-1}(A+\lambda)^{-1}dzd\lambda,
\end{eqnarray*}
where we have used Fubini's theorem. By Cauchy's theorem the first term on the right hand side of the above equation is zero. Therefore,
\begin{eqnarray}\label{kklt}
L_{c}A^{w}=\frac{1}{2\pi i}\int_{-\delta+\Gamma_{\theta_{A}}}(-\lambda)^{w}(B_{c}-\lambda)^{-1}(A+\lambda)^{-1}d\lambda.
\end{eqnarray}

Let $\theta\in(0,1)$. Since the integral 
$$
\int_{\Gamma_{\theta_{A}}}(-\lambda)^{-\theta}B_{c}(B_{c}-\lambda)^{-1}(A+\lambda)^{-1}d\lambda
$$
converges absolutely, by \eqref{kklt} we have that $L_{c}A^{-\theta}$ maps $E$ to $\mathcal{D}(B)$ and 
\begin{eqnarray}\nonumber
\lefteqn{B_{c}L_{c}A^{-\theta}=\frac{1}{2\pi i}\int_{\Gamma_{\theta_{A}}}(-\lambda)^{-\theta}(B_{c}-\lambda+\lambda)(B_{c}-\lambda)^{-1}(A+\lambda)^{-1}d\lambda}\\\nonumber
&=&\frac{1}{2\pi i}\int_{\Gamma_{\theta_{A}}}(-\lambda)^{-\theta}(A+\lambda)^{-1}d\lambda-\frac{1}{2\pi i}\int_{\Gamma_{\theta_{A}}}(-\lambda)^{1-\theta}(B_{c}-\lambda)^{-1}(A+\lambda)^{-1}d\lambda\\\label{kklt2}
&=&A^{-\theta}-\frac{1}{2\pi i}\int_{\Gamma_{\theta_{A}}}(-\lambda)^{1-\theta}(B_{c}-\lambda)^{-1}(A+\lambda)^{-1}d\lambda.
\end{eqnarray}

Moreover, by Condition \ref{c3}, the integrals 
$$
\int_{\Gamma_{\theta_{A}}}(-\lambda)^{-\theta}A(A+\lambda)^{-1}(B_{c}-\lambda)^{-1}d\lambda \quad \text{and} \quad \int_{\Gamma_{\theta_{A}}}(-\lambda)^{-\theta}A[(A+\lambda)^{-1},(B_{c}-\lambda)^{-1}]d\lambda
$$
converge absolutely. Hence, by \eqref{kklt} we find that $L_{c}A^{-\theta}$ maps $E$ to $\mathcal{D}(A)$ and 
\begin{eqnarray}\nonumber
\lefteqn{AL_{c}A^{-\theta}=\frac{1}{2\pi i}\int_{\Gamma_{\theta_{A}}}(-\lambda)^{-\theta}(A+\lambda-\lambda)(A+\lambda)^{-1}(B_{c}-\lambda)^{-1}d\lambda}\\\nonumber
&& -\frac{1}{2\pi i}\int_{\Gamma_{\theta_{A}}}(-\lambda)^{-\theta}A[(A+\lambda)^{-1},(B_{c}-\lambda)^{-1}]d\lambda\\\label{ast}
&=&\frac{1}{2\pi i}\int_{\Gamma_{\theta_{A}}}(-\lambda)^{1-\theta}(A+\lambda)^{-1}(B_{c}-\lambda)^{-1}d\lambda-\frac{1}{2\pi i}\int_{\Gamma_{\theta_{A}}}(-\lambda)^{-\theta}A[(A+\lambda)^{-1},(B_{c}-\lambda)^{-1}]d\lambda.
\end{eqnarray}

Therefore, by \eqref{kklt2} and \eqref{ast} we obtain 
\begin{eqnarray*}
\lefteqn{(A+B_{c})L_{c}A^{-\theta}=A^{-\theta}}\\
&&+\frac{1}{2\pi i}\int_{\Gamma_{\theta_{A}}}(-\lambda)^{1-\theta}[(A+\lambda)^{-1},(B_{c}-\lambda)^{-1}]d\lambda-\frac{1}{2\pi i}\int_{\Gamma_{\theta_{A}}}(-\lambda)^{-\theta}A[(A+\lambda)^{-1},(B_{c}-\lambda)^{-1}]d\lambda.
\end{eqnarray*}
Hence, if $x\in\mathcal{D}(A)$ we find that
\begin{eqnarray*}
\lefteqn{(A+B_{c})L_{c}x=x}\\
&&+\frac{1}{2\pi i}\int_{\Gamma_{\theta_{A}}}(-\lambda)^{1-\theta}[(A+\lambda)^{-1},(B_{c}-\lambda)^{-1}]A^{\theta}xd\lambda-\frac{1}{2\pi i}\int_{\Gamma_{\theta_{A}}}(-\lambda)^{-\theta}A[(A+\lambda)^{-1},(B_{c}-\lambda)^{-1}]A^{\theta}xd\lambda.
\end{eqnarray*}

By the Dunford integral formula for the complex powers and the dominated convergence theorem we have that $A^{\theta}x=A^{\theta-1}Ax\rightarrow x$ when $\theta\rightarrow0$, for any $x\in\mathcal{D}(A)$. Thus, by taking the pointwise limit in the above equation, by the dominated convergence theorem we obtain that 
\begin{eqnarray*}
\lefteqn{(A+B_{c})L_{c}x}\\
&=&\big(I-\frac{1}{2\pi i}\int_{\Gamma_{\theta_{A}}}\lambda[(A+\lambda)^{-1},(B_{c}-\lambda)^{-1}]d\lambda-\frac{1}{2\pi i}\int_{\Gamma_{\theta_{A}}}A[(A+\lambda)^{-1},(B_{c}-\lambda)^{-1}]d\lambda\big)x
\end{eqnarray*}
for all $x\in\mathcal{D}(A)$, where we have used Condition \ref{c3} for the existence of the dominant and for the absolute convergence of the last integrals in the operator norm. 

Finally, by Condition \ref{c3}, the norm of
\begin{eqnarray}\nonumber
\lefteqn{T_{c}=-\frac{1}{2\pi i}\int_{\Gamma_{\theta_{A}}}\lambda[(A+\lambda)^{-1},(B_{c}-\lambda)^{-1}]d\lambda}\\\label{Tc}
&&-\frac{1}{2\pi i}\int_{\Gamma_{\theta_{A}}}A[(A+\lambda)^{-1},(B_{c}-\lambda)^{-1}]d\lambda \in \mathcal{L}(E)
\end{eqnarray}
becomes arbitrary small by taking $|c|$ large enough.
\end{proof}

We are now in the position to impose further assumptions to our operators and make the above constructed perturbations to serve as a left and a right inverse of the sum. The main task is to show that the left inverse approximation given by Proposition \ref{t1} becomes a bounded operator. We manage this by decomposing dyadically the integral representation of the unbounded part and then using the consequences of the $ON$-boundedness and the boundedness of the $H^\infty$-calculus.

\begin{theorem}\label{t2}
Let $E$ be a complex Banach space and $A$, $B$ be linear operators in $E$ such that $A\in\mathcal{H}^{\infty}(\theta_{A})$ and $B\in\mathcal{ON}(\theta_{B})$ with $\theta_{A}+\theta_{B}>\pi$. If Condition \ref{c3} is satisfied, then $A+B$ is closed and there exists a constant $c_{0}\geq0$ such that $\sigma(A+B+c_{0})\in S_{\pi-\min\{\theta_{A},\theta_{B}\}}$. Furthermore, for any $\omega\in[0,\min\{\theta_{A},\theta_{B}\})$ we have that $A+B+c_{0}\in\mathcal{P}(\omega)$.
\end{theorem}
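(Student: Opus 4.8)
The plan is to reduce everything to the following quantitative statement: \emph{there is a constant such that for all $c\in\mathbb{C}$ with $|\arg c|<\pi-\max\{\theta_A,\theta_B\}$ and $|c|$ large enough, $A+B+c$ is boundedly invertible with $\|(A+B+c)^{-1}\|_{\mathcal{L}(E)}\leq\gamma/(1+|c|)$}; the assertions of the theorem are then formal consequences of it. By Propositions \ref{t1} and \ref{t12} I already dispose of the one-sided approximations $AK_{c}(A+B_{c})=(I+P_{c})A$ on $\mathcal{D}(A+B)$ and $(A+B_{c})L_{c}=I+T_{c}$ on $\mathcal{D}(A)$, with $B_{c}=B+c$, $L_{c}\colon\mathcal{D}(A)\to\mathcal{D}(A+B)$, $K_{c},L_{c}\in\mathcal{L}(E)$ and $\|P_{c}\|_{\mathcal{L}(E)},\|T_{c}\|_{\mathcal{L}(E)}\to0$ as $|c|\to\infty$. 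The only missing point is that $K_{c}(A+B_{c})$, a priori defined on $\mathcal{D}(A+B)$ only, extends to an operator in $\mathcal{L}(E)$ that is close to $I$ for large $|c|$; by \eqref{redo} this is equivalent to the assertion that
$$
\mathcal{D}(A)\ni x\ \longmapsto\ \frac{1}{2\pi i}\int_{\Gamma_{\theta_{B}}}[(A-\lambda)^{-1},(B_{c}+\lambda)^{-1}]Ax\,d\lambda
$$
extends to an operator in $\mathcal{L}(E)$ whose norm tends to $0$ as $|c|\to\infty$ (the remaining integral in \eqref{redo} has this property directly by Condition \ref{c3}(i)). Granting this, for $c$ real and large $K_{c}(A+B_{c})=I+S_{c}$ on $\mathcal{D}(A+B)$ with $S_{c}\in\mathcal{L}(E)$, $\|S_{c}\|<1$, so $A+B_{c}$ is injective with the estimate $\|x\|\leq\|(I+S_{c})^{-1}\|\,\|K_{c}\|\,\|(A+B_{c})x\|$ on $\mathcal{D}(A+B)$; applying the bounded left inverse $(I+S_{c})^{-1}K_{c}$ to $(A+B_{c})L_{c}=I+T_{c}$ and using density of $\mathcal{D}(A)$ identifies $(I+S_{c})^{-1}K_{c}=L_{c}(I+T_{c})^{-1}$, and a standard argument (combining the two one-sided approximations and using that $A$ and $B$ are closed) shows that this common bounded operator is a two-sided inverse of $A+B_{c}$. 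The bound $\|(A+B_{c})^{-1}\|\lesssim 1/|c|$ then follows by estimating $\|K_{c}\|$ after deforming, via Cauchy's theorem, the defining contour $\Gamma_{\theta_{B}}$ of $K_{c}$ to $\Gamma_{|c|,\theta_{B}}$ --- legitimate since $\theta_{A}+\theta_{B}>\pi$ makes $(A-z)^{-1}(B_{c}+z)^{-1}$ holomorphic on the cone swept over --- on which $\|(A-z)^{-1}\|\lesssim|z|^{-1}$ and $\|(B_{c}+z)^{-1}\|\lesssim|z|^{-1}$ (here the angle between $z$ and $-c$ stays bounded away from $0$ and $\pi$), whence $\|K_{c}\|\lesssim\int_{\Gamma_{|c|,\theta_{B}}}|z|^{-2}\,|dz|\lesssim 1/|c|$.

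The crux is the extension and smallness of the displayed integral, and this is exactly where $A\in\mathcal{H}^{\infty}(\theta_{A})$ and $B\in\mathcal{ON}(\theta_{B})$ enter; note first that $\int_{\Gamma_{\theta_{B}}}[(A-\lambda)^{-1},(B_{c}+\lambda)^{-1}](\cdot)\,d\lambda$ \emph{alone} is in $\mathcal{L}(E)$ by Condition \ref{c3}(i), so the whole difficulty is the unbounded factor $A$ on the right. I would rewrite $[(A-\lambda)^{-1},(B_{c}+\lambda)^{-1}]A$ using $(A-\lambda)^{-1}A=A(A-\lambda)^{-1}$ on $\mathcal{D}(A)$, the identity $[(A-\lambda)^{-1},(B_{c}+\lambda)^{-1}]=-(A-\lambda)^{-1}[A,(B_{c}+\lambda)^{-1}](A-\lambda)^{-1}$ (meaningful on $\mathcal{D}(A)$ under the standing hypotheses of Condition \ref{c3}), and the representation $[A,(B_{c}+\lambda)^{-1}]x=\lim_{n}\frac{1}{2\pi i}\int_{\Gamma_{\beta}}\psi_{n}(z)[(A+z)^{-1},(B_{c}+\lambda)^{-1}]x\,dz$ (Lemma \ref{com} with $\psi_{n}\in H_{0}^{\infty}$ approximating $\zeta\mapsto\zeta$ and $\beta\in(0,\theta_{A})$), so as to bring the integrand --- after a dyadic splitting of $\Gamma_{\theta_{B}}$ into the arcs $\{re^{\pm i\theta_{B}}\colon 2^{k}\rho\leq r\leq 2^{k+1}\rho\}$, on which $|\lambda|\approx 2^{k}$, plus the fixed near-origin arc --- into the shape $(\text{function of }A\text{ at scale }2^{-k})\circ(B_{c}\text{-resolvent at }\lambda)\circ(\text{function of }A\text{ at scale }2^{-k})$, weighted by a summable factor $\sim(1+2^{k})^{-\varepsilon}$ supplied by Condition \ref{c3} (the $|\mu|$-slot being $|c+\lambda|\gtrsim 2^{k}$, which simultaneously yields the $|c|$-decay). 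The essential observation making this possible is that, since $\pi-\theta_{B}<\theta_{A}$, for $\lambda\in\Gamma_{\theta_{B}}$ the resolvent $(A-\lambda)^{-1}$ \emph{is} $f_{\lambda}(-A)$ with $f_{\lambda}\in H^{\infty}(\phi)$ for some fixed $\phi\in(\pi-\theta_{B},\theta_{A})$ and $\|f_{\lambda}\|_{H^{\infty}(\phi)}\lesssim|\lambda|^{-1}$, so that the two $A$-factors can be arranged as $h_{1}(-t2^{-k}A)$ and the adjoint of $h_{2}(-t2^{-k}A^{\ast})$ with $h_{1},h_{2}\in H_{0}^{\infty}(\phi)$ of uniformly bounded norm and $t\in[1,2]$ a residual parameter.

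With this in hand I would test the displayed operator against $x^{\ast}\in E^{\ast}$ (recalling that $A^{\ast}\in\mathcal{H}^{\infty}(\theta_{A})$), apply the triangle inequality over the dyadic pieces and integrate in $t\in[1,2]$, reducing matters to bounding sums $\bigl|\sum_{k}\langle\lambda_{k}(B_{c}+\lambda_{k})^{-1}u_{k},v_{k}\rangle\bigr|$ with $u_{k}=w_{k}h_{1}(-t2^{-k}A)x$, $v_{k}=h_{2}(-t2^{-k}A^{\ast})x^{\ast}$, $|w_{k}|\leq1$, and $\lambda_{k}$ on the $k$-th dyadic arc. Since $B\in\mathcal{ON}(\theta_{B})$, the family $\{\lambda(B_{c}+\lambda)^{-1}\colon\lambda\in S_{\theta_{B}}\setminus\{0\}\}$ is $ON$-bounded uniformly in $c$ by Lemma \ref{easy}, so Lemma \ref{weakon} bounds the above by $\tilde{C}\bigl(\sup_{a_{k}\in\mathbb{D}}\|\sum_{k}a_{k}u_{k}\|_{E}\bigr)\bigl(\sup_{b_{k}\in\mathbb{D}}\|\sum_{k}b_{k}v_{k}\|_{E^{\ast}}\bigr)$, and by Lemma \ref{LKW} applied to $A$ and to $A^{\ast}$ the two suprema are $\leq C\|x\|_{E}$ and $\leq C\|x^{\ast}\|_{E^{\ast}}$. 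Summing the dyadic weights and the $t$-integral yields the required bound on the displayed operator, with constant tending to $0$ as $|c|\to\infty$ by the $|c+\lambda|$-slot of Condition \ref{c3}. This completes the reduction target, whence $A+B+c_{0}\in\mathcal{P}(\omega)$ for every $\omega\in[0,\pi-\max\{\theta_{A},\theta_{B}\})$ and a fixed large $c_{0}\geq0$ (indeed $(1+|\mu|)\|(A+B+c_{0}+\mu)^{-1}\|\lesssim(1+|\mu|)/(1+|c_{0}+\mu|)$ is bounded on $S_{\omega}$ since $|c_{0}+\mu|\geq\max(c_{0},|\mu|\cos\omega)\gtrsim 1+|\mu|$); in particular $A+B+c_{0}$, hence $A+B$, is closed, and $\sigma(A+B+c_{0})\subseteq S_{\max\{\theta_{A},\theta_{B}\}}$ follows by taking the union of the sectors $S_{\omega}\subset\rho(-(A+B+c_{0}))$.

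I expect the hard part to be precisely the construction in the second paragraph: arranging, via the identity $[(A-\lambda)^{-1},(B_{c}+\lambda)^{-1}]=-(A-\lambda)^{-1}[A,(B_{c}+\lambda)^{-1}](A-\lambda)^{-1}$ together with Lemma \ref{com}, for the two $A$-dependent factors to land \emph{exactly} in the form covered by Lemma \ref{LKW} --- which is what forces the use of $\pi-\theta_{B}<\theta_{A}$ so that the relevant poles of $(A-\lambda)^{-1}$ stay inside $S_{\phi}$ for some $\phi<\theta_{A}$, and a careful use of all three parts of Condition \ref{c3} to obtain genuine (not merely absolutely convergent) estimates --- while at the same time extracting real decay in $|c|$ rather than mere finiteness.
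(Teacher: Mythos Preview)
Your overall strategy and use of Propositions~\ref{t1}, \ref{t12}, Lemma~\ref{weakon}, Lemma~\ref{LKW} and the contour estimate for $\|K_{c}\|$ are all in the right spirit, but the ``hard part'' you isolate is not the same as the paper's, and your sketch for it has a genuine gap. The paper does \emph{not} show that $K_{c}(A+B_{c})-I$ extends to a bounded operator with small norm; it shows instead that $AK_{c}\in\mathcal{L}(E)$ with a bound that is merely \emph{uniform} in $c$ (no decay). The smallness needed for invertibility is then imported entirely from $P_{c}$ and $T_{c}$ via the identity $A^{-1}(I+P_{c})^{-1}AK_{c}(A+B_{c})=I$ on $\mathcal{D}(A+B)$. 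Your target, by contrast, amounts to showing that $A^{-1}P_{c}A$ extends boundedly on $E$ with norm $\to 0$; this does not follow from $\|P_{c}\|_{\mathcal{L}(E)}\to 0$, and none of the three parts of Condition~\ref{c3} controls the commutator with $A$ on the \emph{right}.

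The specific place where your sketch breaks down is the second paragraph. You claim to bring the integrand into the shape $(\text{function of }A\text{ at scale }2^{-k})\circ(B_{c}\text{-resolvent})\circ(\text{function of }A\text{ at scale }2^{-k})$ \emph{and} at the same time retain a weight $(1+2^{k})^{-\varepsilon}$ ``supplied by Condition~\ref{c3}''. These two structures are mutually exclusive: once the integrand is a product of this shape, there is no commutator left for Condition~\ref{c3} to act on, so neither the summable $k$-weight nor the $|c|$-decay emerges. Conversely, if you keep a genuine commutator in the middle (via your $\lim_{n}\int\psi_{n}(z)[(A+z)^{-1},(B_{c}+\lambda)^{-1}]\,dz$ representation), then Lemma~\ref{weakon} does not apply because the middle factor is no longer of the form $\lambda(B_{c}+\lambda)^{-1}$. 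In the paper's proof this tension is resolved by a different splitting: one first writes $AA^{-\theta}K_{c}$ as an absolutely convergent integral of the product $\lambda^{1-\theta}(A-\lambda)^{-1}(B_{c}+\lambda)^{-1}$ (no commutator), dyadically decomposes, uses Lemma~\ref{com} to commute the $A$-piece $f_{k,p}^{\pm}(-A)$ past the $B_{c}$-resolvent, applies Lemma~\ref{weakon} and Lemma~\ref{LKW} to the main term (getting a bound uniform in $c$, not decaying), and bounds the resulting commutator correction $Q_{f_{k,p}^{\pm}}$ separately via Condition~\ref{c3}(i). The outcome is only $\|AK_{c}\|\leq\tilde C_{A,B}$ uniformly in $c$; the decay you need is then obtained from Proposition~\ref{t1}, not from the dyadic estimate. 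If you want to salvage your route, you should abandon the goal of making the displayed integral small and instead aim directly at the uniform boundedness of $AK_{c}$, after which your endgame (combining with Proposition~\ref{t12}) works exactly as you wrote.
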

\begin{proof}
Let $\psi\in[0,\pi-\max\{\theta_{A},\theta_{B}\})$, $c\in S_{\psi}$ and $\psi_{A}=\theta_{A}-\varepsilon$ with $\varepsilon>0$ sufficiently small. Since the integral 
\begin{gather*}
\int_{-\Gamma_{\psi_{A}}}\lambda^{-\theta}A(A-\lambda)^{-1}(B_{c}+\lambda)^{-1}d\lambda
\end{gather*}
converges absolutely when $\theta\in(0,1)$, by \eqref{aak} we have that $A^{-\theta}K_{c}$ maps $E$ to $\mathcal{D}(A)$ and 
\begin{eqnarray*}
\lefteqn{AA^{-\theta}K_{c}=\frac{1}{2\pi i}\int_{-\Gamma_{\psi_{A}}}\lambda^{-\theta}A(A-\lambda)^{-1}(B_{c}+\lambda)^{-1}d\lambda}\\
&=&\frac{1}{2\pi i}\int_{-\Gamma_{\psi_{A}}}\lambda^{-\theta}(B_{c}+\lambda)^{-1}d\lambda+\frac{1}{2\pi i}\int_{-\Gamma_{\psi_{A}}}\lambda^{1-\theta}(A-\lambda)^{-1}(B_{c}+\lambda)^{-1}d\lambda\\
&=&\frac{1}{2\pi i}\int_{-\Gamma_{\psi_{A}}}\lambda^{1-\theta}(A-\lambda)^{-1}(B_{c}+\lambda)^{-1}d\lambda.
\end{eqnarray*}
Therefore, by replacing $\theta$ with $\theta+\phi$, with the further restriction $\theta,\phi\in(0,1)$ such that $\theta+\phi<1$, and then applying $A^{\phi}$ to the above equation, we find that 
\begin{gather}\label{ppl}
AA^{-\theta}K_{c}=U_{\theta}+G_{\theta}
\end{gather}
with
\begin{gather*}
U_{\theta}=\frac{1}{2\pi i}\int_{-\Gamma_{\psi_{A}}\cap \mathbb{D}}\lambda^{1-(\theta+\phi)}A^{\phi}(A-\lambda)^{-1}(B_{c}+\lambda)^{-1}d\lambda
\end{gather*}
and
\begin{gather*}
G_{\theta}=\frac{1}{2\pi i}\int_{-\Gamma_{\psi_{A}}\cap(\mathbb{C}\backslash \mathbb{D})}\lambda^{1-(\theta+\phi)}A^{\phi}(A-\lambda)^{-1}(B_{c}+\lambda)^{-1}d\lambda,
\end{gather*}
where we have used Lemma \ref{l1} for the absolute convergence of the above integral. 

For any $m\in\mathbb{N}$ define
\begin{eqnarray}\nonumber
\lefteqn{G_{m}^{\pm} = \frac{e^{\pm i(\pi-\psi_{A})(2-(\theta+\phi))}}{2\pi i}\int_{1}^{2^{m}}A^{\phi}(A-re^{\pm i(\pi-\psi_{A})})^{-1}(B_{c}+re^{\pm i(\pi-\psi_{A})})^{-1}r^{1-(\theta+\phi)}dr}\\\nonumber
&=&\frac{e^{\pm i(\pi-\psi_{A})(2-(\theta+\phi))}}{2\pi i} \sum_{k=0}^{m-1}\int_{2^{k}}^{2^{k+1}}A^{\phi}(A+re^{\mp i\psi_{A}})^{-1}(B_{c}-re^{\mp i\psi_{A}})^{-1}r^{1-(\theta+\phi)}dr\\\nonumber
&=&\frac{e^{\pm i(\pi-\psi_{A})(2-(\theta+\phi))}}{2\pi i}\sum_{k=0}^{m-1}\int_{1}^{2}A^{\phi}(A+t2^{k}e^{\mp i\psi_{A}})^{-1}(B_{c}-t2^{k}e^{\mp i\psi_{A}})^{-1}t^{1-(\theta+\phi)}2^{k(2-(\theta+\phi))}dt\\\label{gm}
& =& \frac{e^{\pm i(\psi_{A}-\pi)(\theta+\phi)}}{2\pi i}\int_{1}^{2}W_{m}^{\pm}(t)\frac{dt}{t},
\end{eqnarray}
where 
$$
W_{m}^{\pm}(t)=\sum_{k=0}^{m-1}A^{\phi}(A+t2^{k}e^{\mp i\psi_{A}})^{-1}(B_{c}-t2^{k}e^{\mp i\psi_{A}})^{-1}t^{2-(\theta+\phi)}2^{k(2-(\theta+\phi))}e^{\mp i2\psi_{A}},
$$
and take any $x\in E$, $x^{\ast}\in E^{\ast}$. 

We proceed now as in \cite{KW1} in order to obtain uniform estimates for the above operator families. More precisely we have
\begin{eqnarray*}
\lefteqn{|\langle W_{m}^{\pm}(t)x,x^{\ast}\rangle|}\\
&=&|\sum_{k=0}^{m-1}\langle t^{\frac{1-(\theta+\phi)}{p}}2^{\frac{k}{p}(1-(\theta+\phi))}e^{\mp i\frac{\psi_{A}}{p}}\big(A^{\phi}(A+t2^{k}e^{\mp i\psi_{A}})^{-1}\big)^{\frac{1}{p}}(B_{c}-t2^{k}e^{\mp i\psi_{A}})^{-1}\\
&&\times t2^{k}e^{\mp i\psi_{A}}x,t^{\frac{1-(\theta+\phi)}{q}}2^{\frac{k}{q}(1-(\theta+\phi))}e^{\mp i\frac{\psi_{A}}{q}}\big((A^{\ast})^{\phi}(A^{\ast}+t2^{k}e^{\mp i\psi_{A}})^{-1}\big)^{\frac{1}{q}}x^{\ast}\rangle |,
\end{eqnarray*} 
where $p,q>1$ such that $\frac{1}{p}+\frac{1}{q}=1$. Denote
$$
f_{k,j}^{\pm}(z)=t^{-\frac{\theta}{j}}2^{-\frac{k\theta}{j}}e^{\mp i\frac{\psi_{A}}{j}}h_{j}^{\pm}(zt^{-1}2^{-k}),
$$
where 
$$
h_{j}^{\pm}(w)=\big((-w)^{\phi}(-w+e^{\mp i\psi_{A}})^{-1}\big)^{\frac{1}{j}} \in H_{0}^{\infty}(\theta_{A}-\frac{\varepsilon}{2}), \quad j\in\{p,q\}.
$$
Then, Lemma \ref{com} implies
\begin{eqnarray*}
\lefteqn{|\langle W_{m}^{\pm}(t)x,x^{\ast}\rangle|}\\
&=&|\sum_{k=0}^{m-1}\langle f_{k,p}^{\pm}(-A)(B_{c}-t2^{k}e^{\mp i\psi_{A}})^{-1}t2^{k}e^{\mp i\psi_{A}}x,f_{k,q}^{\pm}(-A^{\ast})x^{\ast}\rangle |\\
&\leq&|\sum_{k=0}^{m-1}\langle (B_{c}-t2^{k}e^{\mp i\psi_{A}})^{-1}t2^{k}e^{\mp i\psi_{A}} f_{k,p}^{\pm}(-A)x,f_{k,q}^{\pm}(-A^{\ast})x^{\ast}\rangle |\\
&&+|\sum_{k=0}^{m-1}\langle Q_{f_{k,p}^{\pm}}(c-t2^{k}e^{\mp i\psi_{A}})t2^{k}e^{\mp i\psi_{A}}x,f_{k,q}^{\pm}(-A^{\ast})x^{\ast}\rangle |.
\end{eqnarray*} 
Due to Lemma \ref{easy}, $B_{c}$ belongs again to $\mathcal{ON}(\theta_{B})$ and its $ON$-sectorial bound is uniformly bounded in $c$. Hence, by Lemma \ref{weakon} we obtain 
\begin{eqnarray}\nonumber
\lefteqn{|\langle W_{m}^{\pm}(t)x,x^{\ast}\rangle|}\\\nonumber
&\leq&C_{0}\big(\sup_{a_{k}\in \mathbb{D}}\|\sum_{k=0}^{m-1}a_{k} f_{k,p}^{\pm}(-A)x\|_{E}\big)\big(\sup_{b_{k}\in \mathbb{D}}\|\sum_{k=0}^{m-1} b_{k}f_{k,q}^{\pm}(-A^{\ast})x^{\ast}\|_{E^{\ast}}\big)\\\label{haw}
&&+C_{0}\big(\sup_{a_{k}\in \mathbb{D}}\|\sum_{k=0}^{m-1} a_{k}Q_{f_{k,p}^{\pm}}(c-t2^{k}e^{\mp i\psi_{A}})t2^{k}e^{\mp i\psi_{A}}x\|_{E}\big)\big(\sup_{b_{k}\in \mathbb{D}}\| \sum_{k=0}^{m-1} b_{k}f_{k,q}^{\pm}(-A^{\ast})x^{\ast}\|_{E^{\ast}}\big),
\end{eqnarray} 
for some constant $C_{0}>0$ that depends only on the $ON$-sectorial bound of $B$. 

Concerning the family of bounded operators $Q_{f_{k,p}^{\pm}}(c-t2^{k}e^{\mp i\psi_{A}})t2^{k}e^{\mp i\psi_{A}}$, by Condition \ref{c3} we estimate
\begin{eqnarray*}
\lefteqn{\|Q_{f_{k,p}^{\pm}}(c-t2^{k}e^{\mp i\psi_{A}})t2^{k}e^{\mp i\psi_{A}}\|_{\mathcal{L}(E)}}\\
&\leq& \frac{C}{2\pi}\int_{\Gamma_{\theta_{A}}}\frac{t2^{k}|f_{k,p}^{\pm}(z)|}{(1+|z|^{\alpha_{1}})(1+|c-t2^{k}e^{\mp i\psi_{A}}|^{\beta_{1}})}dz\\
&\leq&\frac{C}{2\pi} t^{-\frac{\theta}{p}}2^{-\frac{k\theta}{p}}\int_{\Gamma_{\theta_{A}}} \frac{t2^{k}|zt^{-1}2^{-k}|^{\frac{\phi}{p}}}{(1+|z|^{\alpha_{1}})(1+|c-t2^{k}e^{\mp i\psi_{A}}|^{\beta_{1}})|zt^{-1}2^{-k}-e^{\mp i\psi_{A}}|^{\frac{1}{p}}}dz.
\end{eqnarray*}
By changing variables and taking appropriate values for $p$ and $\phi$, the last integral in the above inequality converges absolutely and it is uniformly bounded in $t$ and $\theta$ by $2^{(2-\alpha_{1}-\beta_{1})k}$, for each $k$. More precisely, by possibly increasing $C$ we can assume that $\alpha_{1}<2$, and then, by taking $p$ close to $1$ and $\phi$ close to $0$ when $\alpha_{1}<1$ and $p$, $\phi$ both close to $1$ when $\alpha_{1}\geq1$, we have that 
\begin{gather*}
\|Q_{f_{k,p}^{\pm}}(c-t2^{k}e^{\mp i\psi_{A}})t2^{k}e^{\mp i\psi_{A}}\|_{\mathcal{L}(E)}\leq C_{1}2^{(2-\alpha_{1}-\beta_{1})k}
\end{gather*}
for some constant $C_{1}$ independent of $t$, $\theta$ and $k$. Therefore, \eqref{haw} and Lemma \ref{LKW} imply that
\begin{gather*}
|\langle W_{m}^{\pm}(t)x,x^{\ast}\rangle|\leq C_{2} \|x\|_{E}\|x^{\ast}\|_{E^{\ast}},
\end{gather*} 
with some constant $C_{2}$ independent of $m$, $t$ and $\theta$. Hence, by \eqref{gm} we have
\begin{gather*}
\|G_{m}^{\pm}x\|_{E}\leq \frac{C_{2}}{2\pi} \|x\|_{E}
\end{gather*} 
and by taking the limit as $m\rightarrow\infty$ we obtain
\begin{gather}\label{iuy}
\|G_{\theta}x\|_{E}\leq \frac{C_{2}}{\pi} \|x\|_{E}.
\end{gather} 
Clearly,
\begin{gather}\label{ppo}
\|U_{\theta}x\|_{E}\leq C_{3} \|x\|_{E},
\end{gather}
for some constant $C_{3}$ independent of $\theta$. Therefore, \eqref{ppl}, \eqref{iuy} and \eqref{ppo} imply that
\begin{gather*}
\|AA^{-\theta}K_{c}x\|_{E}\leq C_{4} \|x\|_{E},
\end{gather*}
with some constant $C_{4}$ independent of $\theta$. By Lemma \ref{l32}, if $y\in \mathcal{D}(B)$, we have that $K_{c}y\in\mathcal{D}(A)$. Hence, 
\begin{gather*}
\|A^{-\theta}AK_{c}y\|_{E}\leq C_{4} \|y\|_{E},
\end{gather*}
and by taking the limit as $\theta\rightarrow0$ we obtain 
\begin{gather}\label{AK}
\|AK_{c}y\|_{E}\leq C_{4} \|y\|_{E}.
\end{gather}
By the closedness of $A$ and a density argument we conclude that $K_{c}$ maps $E$ to $\mathcal{D}(A)$ and $AK_{c}\in\mathcal{L}(E)$. 

By taking $|c|$ sufficiently large, from Proposition \ref{t1} and Proposition \ref{t12} we have that
\begin{gather}\label{kh8}
A^{-1}(I+P_{c})^{-1}AK_{c}(A+B_{c})=I \quad \text{in} \quad \mathcal{D}(A+B)
\end{gather}
and
\begin{gather}\label{lat}
(A+B_{c})L_{c}=I+T_{c} \quad\, \text{in} \quad \mathcal{D}(A).
\end{gather}
By combining the above equations we find that
\begin{gather}\label{tre}
A^{-1}(I+P_{c})^{-1}AK_{c}(I+T_{c})=L_{c} \quad\, \text{in} \quad \mathcal{D}(A).
\end{gather}
Due to the boundedness of $AK_{c}$ the above formula also holds in $E$. Therefore, we conclude that $L_{c}$ maps $E$ to $\mathcal{D}(A)$ and $AL_{c}\in\mathcal{L}(E)$. Then, by \eqref{lat}, i.e. by
$$
B_{c}L_{c}=I+T_{c}-AL_{c} \quad\, \text{in} \quad \mathcal{D}(A),
$$
the closedness of $B$ and a density argument, we also deduce that $L_{c}$ maps $E$ to $\mathcal{D}(B)$ and $B_{c}L_{c}\in\mathcal{L}(E)$. The right inverse of $A+B_{c}$ then follows by \eqref{lat} and the invertibility of $I+T_{c}$. Hence, \eqref{lat} and \eqref{tre} can be improved to
\begin{gather*}\label{kh7}
(A+B_{c})L_{c}=I+T_{c} \quad\, \text{in} \quad E
\end{gather*}
and 
\begin{gather*}\label{kh88}
A^{-1}(I+P_{c})^{-1}AK_{c}(I+T_{c})=L_{c} \quad \text{in} \quad E.
\end{gather*}
By the invertibility of $I+T_{c}$, the last equation implies that the left inverse of $A+B_{c}$, which is given by \eqref{kh8}, maps to $\mathcal{D}(A+B)$, and therefore closedness follows.

Concerning the sectoriality of $A+B+c_{0}$, for sufficiently large $c_{0}\geq0$, we first note that by Proposition \ref{t12} the norm $\|(I+T_{c})^{-1}\|_{\mathcal{L}(E)}$ is uniformly bounded in $c$ when $|c|\in [c_{0},\infty)$. By changing $z=(1+|c|)\mu$ in the integral formula for $L_{c}$ we find that 
\begin{eqnarray*}
\lefteqn{L_{c}=\frac{1}{2\pi i}\int_{\Gamma_{\theta_{B}}}(B+c+(1+|c|)\mu)^{-1}(A-(1+|c|)\mu)^{-1}(1+|c|)d\mu}\\
&=&\frac{1}{2\pi i}\int_{\Gamma_{\rho,\theta_{B}}}(B+c+(1+|c|)\mu)^{-1}(A-(1+|c|)\mu)^{-1}(1+|c|)d\mu,
\end{eqnarray*}
for some sufficiently small $\rho>0$ independent of $c$. By standard sectoriality we estimate
\begin{gather}\label{secbound}
\|L_{c}\|_{\mathcal{L}(E)}\leq \frac{1}{2\pi(1+|c|)}\int_{\Gamma_{\rho,\theta_{B}}}\frac{\kappa_{A}\kappa_{B}}{((1+|c|)^{-1}+|c(1+|c|)^{-1}+\mu|)((1+|c|)^{-1}+|\mu|)}d\mu,
\end{gather}
where $\kappa_{A}$ and $\kappa_{B}$ are sectorial bounds for $A$ and $B$ respectively. If $c=c_{0}+\nu$, $\nu\in S_{\psi}$, then the above absolutely convergent integral is uniformly bounded in $\nu$. Therefore by 
\begin{gather}\label{inversee}
(A+B_{c})^{-1}=L_{c}(I+T_{c})^{-1}
\end{gather}
we conclude that $\sigma(A+B+c_{0})\in S_{\max\{\theta_{A},\theta_{B}\}}$ and for any $\psi\in[0,\pi-\max\{\theta_{A},\theta_{B}\})$ we have that $A+B+c_{0}\in\mathcal{P}(\psi)$.

Now if $\omega\in[0,\min\{\theta_{A},\theta_{B}\})$ and $s\in S_{\omega}$ we have that $B+s\in\mathcal{P}(\theta_{B})$ when $\theta_{B}\leq\frac{\pi}{2}$ and $B+s\in \mathcal{P}(\pi-\omega)$ when $\theta_{B}>\frac{\pi}{2}$. Due to this observation we can replace $B$ in the previous case by $B+s$ and $c$ by $c_{0}\geq0$ sufficiently large so that $(A+B+c_{0}+s)^{-1}\in\mathcal{L}(E,\mathcal{D}(A)\cap\mathcal{D}(B))$ exists and is given by \eqref{inversee} with $c$ replaced by $c_{0}+s$. The uniform boundedness in $s\in S_{\omega}$ of $\|(I+T_{c_{0}+s})^{-1}\|_{\mathcal{L}(E)}$ and $(1+|s|)\|L_{c_{0}+s}\|_{\mathcal{L}(E)}$ follows respectively by \eqref{Tc} and \eqref{secbound} (by possibly replacing $\theta_{B}$ with $\pi-\omega$). Therefore, by \eqref{inversee} with $c$ replaced by $c_{0}+s$ we deduce that $(1+|s|)\|(A+B+c_{0}+s)^{-1}\|_{\mathcal{L}(E)}$ is also uniformly bounded in $s\in S_{\omega}$.
\end{proof}

By following the proof of the above theorem we make the following observation.

\begin{lemma}\label{l7}
Suppose that in the assumptions of Theorem \ref{t2} we keep $\theta_{B}$ fixed and replace $B$ by a family of operators $B(\xi)\in \mathcal{ON}(\theta_{B})$, $\xi\in \Xi$, indexed by a set $\Xi$, such that $A$ and $B(\xi)$ are resolvent commuting for each $\xi$. Then, 
\begin{itemize}
\item[(i)] The shift $c_{0}$ can be chosen to be equal to zero for each $\xi\in \Xi$. 
\item[(ii)] If the sectorial bound and the $ON$-sectorial bound of $B(\xi)$ are uniformly bounded in $\xi\in \Xi$, then the sectorial bound of $A+B(\xi)\in\mathcal{P}(0)$ can be chosen to be uniformly bounded in $\xi\in\Xi$. Furthermore, the $\mathcal{L}(E)$-norm of $B(\xi)(A+B(\xi)+\nu)^{-1}$ is uniformly bounded in $(\xi,\nu)\in \Xi\times [0,\infty)$.
\end{itemize}
\end{lemma}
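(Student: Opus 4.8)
The plan is to re-run the proof of Theorem \ref{t2} for each pair $(A,B(\xi))$, keeping track of the dependence of all constants on the data and exploiting that resolvent commutativity collapses most of the machinery. Since $[(A+\lambda)^{-1},(B(\xi)+\mu)^{-1}]=0$ on $S_{\theta_{A}}\times S_{\theta_{B}}$, Condition \ref{c3} holds with $C=0$ (and the mapping property $(B(\xi)+\mu)^{-1}:\mathcal D(A)\to\mathcal D(A)$ is automatic from commutativity), so Theorem \ref{t2} and Propositions \ref{t1}, \ref{t12} apply to every $\xi$. The point is that with $C=0$ the operators $P_{c}$ of \eqref{Pc} and $T_{c}$ of \eqref{Tc} vanish identically, and in the proof of Theorem \ref{t2} the commutator terms $Q_{f^{\pm}_{k,p}}$ are zero, hence the constant $C_{Q}$ there is $0$. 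Therefore: (a) equations \eqref{kh8}, \eqref{lat}, \eqref{tre} reduce to $K_{c}(A+B_{c})=I$ on $\mathcal D(A+B(\xi))$ and $(A+B_{c})L_{c}=I$ on $E$ with $L_{c}=K_{c}$, for \emph{every} $c\in S_{\omega}$, $\omega\in[0,\pi-\max\{\theta_{A},\theta_{B}\})$ — no largeness of $|c|$ is needed; (b) the bound $\|AK_{c}\|_{\mathcal L(E)}\le\tilde C_{A,B}$ obtained in the proof of Theorem \ref{t2} holds with $\tilde C_{A,B}$ depending on $B(\xi)$ only through the sectorial bound $\kappa_{B(\xi)}$ (via Lemma \ref{l1} in the estimate of the bounded part) and the $ON$-sectorial bound of $B(\xi)$ (via $C_{B}$ in \eqref{haw}, using Lemma \ref{easy}); in particular $\tilde C_{A,B}$ is uniform in $\xi$ and in $c\in S_{\omega}$. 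By (a), $A+B(\xi)+c$ is invertible with bounded two-sided inverse $L_{c}^{(\xi)}=K_{c}^{(\xi)}$ for all such $c$, which already yields $c_{0}=0$ once a resolvent bound on the sector is in place.

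For the resolvent estimate I would patch two regimes. For $|c|$ bounded away from $0$, the computation leading to \eqref{secbound} goes through verbatim (it uses only sectoriality of $A$ and $B(\xi)$, and, by Remark \ref{r6}, a fixed contour $\Gamma_{\rho,\theta_{B}}$ once the $\kappa_{B(\xi)}$ are uniform), giving $\|L_{c}^{(\xi)}\|\le C_{1}/|c|$ with $C_{1}$ depending only on $\kappa_{A}$, a uniform bound for $\kappa_{B(\xi)}$, and $\theta_{B},\rho,\omega$. For $|c|$ small, I expand $A+B(\xi)+c=(A+B(\xi))(I+c(A+B(\xi))^{-1})$ in a Neumann series around $L_{0}^{(\xi)}=K_{0}^{(\xi)}$; the norm of $K_{0}^{(\xi)}=\frac{1}{2\pi i}\int_{\Gamma_{\rho,\theta_{B}}}(A-z)^{-1}(B(\xi)+z)^{-1}dz$ is bounded by a uniform constant $C_{2}$ (integrand $O(|z|^{-2})$ at infinity, extended sectoriality on the arc of $\Gamma_{\rho,\theta_{B}}$). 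Patching gives $(1+|c|)\|L_{c}^{(\xi)}\|\le\kappa$ uniformly in $\xi$ and $c\in S_{\omega}$; hence $A+B(\xi)\in\mathcal P(\omega)$ for every $\omega\in[0,\pi-\max\{\theta_{A},\theta_{B}\})$ with sectorial bound $\le\kappa$, and in particular $\sigma(A+B(\xi))\subset S_{\max\{\theta_{A},\theta_{B}\}}$. This proves (i) and, taking $\omega=0$, the first assertion of (ii).

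For the second assertion of (ii), I observe that $(A+B_{c})L_{c}=I$ on $E$ with $c=\nu\ge0$ and $B_{c}=B(\xi)+\nu$ gives $B(\xi)L_{\nu}^{(\xi)}=I-AL_{\nu}^{(\xi)}-\nu L_{\nu}^{(\xi)}$ on $E$ (legitimate because, exactly as in the proof of Theorem \ref{t2}, $L_{\nu}^{(\xi)}$ maps $E$ into $\mathcal D(A)\cap\mathcal D(B(\xi))$). Now $\|AL_{\nu}^{(\xi)}\|=\|AK_{\nu}^{(\xi)}\|\le\tilde C_{A,B}$ by (b), and $\nu\|L_{\nu}^{(\xi)}\|\le\kappa$ by the first assertion of (ii); therefore $\|B(\xi)(A+B(\xi)+\nu)^{-1}\|\le 1+\tilde C_{A,B}+\kappa$, uniformly in $(\xi,\nu)\in\Xi\times[0,\infty)$.

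The only real work, and the main source of error if one is careless, is the bookkeeping: one must verify that every constant appearing in the proof of Theorem \ref{t2} that is reused here — $\tilde C_{A,B}$, the constant in \eqref{secbound}, and the Neumann-series constant $C_{2}$ — depends on $B(\xi)$ only through $\kappa_{B(\xi)}$ and the $ON$-sectorial bound of $B(\xi)$, and is independent of $c$. This is precisely where the identical vanishing of $P_{c}$, $T_{c}$, $C_{Q}$ and the fixed-contour statement of Remark \ref{r6} are used. Beyond this there is no new analytic difficulty not already present in the proof of Theorem \ref{t2}.
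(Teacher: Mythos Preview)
Your argument is correct and follows the same line as the paper's proof: resolvent commutativity forces $P_{c}=T_{c}=0$ in \eqref{Pc}, \eqref{Tc} and $Q_{f}=0$ in Lemma \ref{com}, so the shift $c_{0}$ is unnecessary; the sectorial bound of $A+B(\xi)$ is read off from \eqref{secbound} (with Remark \ref{r6} fixing $\rho$ uniformly), and the bound on $B(\xi)(A+B(\xi)+\nu)^{-1}$ follows from the identity $B(\xi)L_{\nu}=I-AL_{\nu}-\nu L_{\nu}$ together with \eqref{AK}. The one place where you are more explicit than the paper is the small-$|c|$ regime: the paper simply invokes \eqref{secbound} for the full sectoriality statement, whereas you patch in a Neumann series around $L_{0}^{(\xi)}$ for $|c|$ near $0$; this is a harmless (and arguably cleaner) way to secure the bound $(1+|c|)\|L_{c}^{(\xi)}\|\le\kappa$ all the way down to $c=0$, but it is not a different strategy.
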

\begin{proof}
In the proof of Theorem \ref{t2}, $c_{0}$ was taken large enough in order to make sufficiently small the $\mathcal{L}(E)$-norms of the operators $P_c$ and $T_c$ from Proposition \ref{t1} and Proposition \ref{t12} respectively. Since $A$ and $B(\xi)$ are resolvent commuting for each $\xi$, by \cite[III.4.9.1 (ii)]{Am}, \eqref{Pc} and \eqref{Tc} we clearly have that $P_{c}=T_{c}=0$. Therefore, we can take $c_{0}=0$ for all $\xi$ (see also \cite[Theorem 3.7]{DG}).

Furthermore, only the sectorial bounds of the two summands contribute to the estimate \eqref{secbound}, where $\rho$ can now be chosen to be fixed due to Remark \ref{r6}. Hence, the sectorial bound of $A+B(\xi)$ can be chosen to be uniformly bounded in $\xi\in\Xi$. Finally, the $\mathcal{L}(E)$-norm of $B(\xi)(A+B(\xi)+\nu)^{-1}$ can be estimated by the sectorial bound of $A+B(\xi)$ and the $\mathcal{L}(E)$-norm of $A(A+B(\xi)+\nu)^{-1}$. By \eqref{AK}, $\nu$ does not contribute to the estimate of the last norm and $\xi$ contributes only by the sectorial and the $ON$-sectorial bound of $B(\xi)$. Here we have noted that the operator $Q_{f}$ from Lemma \ref{com} is zero in our case. Thus, the $\mathcal{L}(E)$-norm of $B(\xi)(A+B(\xi)+\nu)^{-1}$ is uniformly bounded in $(\xi,\nu)\in \Xi\times [0,\infty)$.
\end{proof}

\section{An application to the abstract linear non-autonomous parabolic problem}

In this section, we apply the previous result on the closedness and invertibility for the sum of two closed operators in order to recover a classical result on the existence, uniqueness and maximal $L^{p}$-regularity for solutions of the abstract linear non-autonomous parabolic equation. We will require the natural extensions of our operators from the original space to the Bochner space to be $ON$-sectorial. Therefore, we restrict to an ideal subclass of $\mathcal{E}$-sectorial operators, namely to the $R$-sectorial operators. 

\begin{definition}\label{Defrs}
Let $E$ be a complex Banach space, $\theta\in[0,\pi)$ and $\{\epsilon_{k}\}_{k\in\mathbb{N}}$ be the sequence of the Rademacher functions. Denote by $\mathcal{R}_{\kappa}(\theta)$, $\kappa\geq1$, the class of all operators $A\in\mathcal{P}(\theta)$ in $E$ such that for any choice of $\lambda_{1},...,\lambda_{n}\in S_{\theta}\backslash\{0\}$ and $x_{1},...,x_{n}\in E$, $n\in\mathbb{N}$, we have
$$
\big\|\sum_{k=1}^{n}\epsilon_{k}\lambda_{k}(A+\lambda_{k})^{-1}x_{k}\big\|_{L^{2}(0,1;E)} \leq \kappa \big\|\sum_{k=1}^{n}\epsilon_{k}x_{k}\big\|_{L^{2}(0,1;E)}.
$$
The elements in $\mathcal{R}(\theta)=\cup_{\kappa\geq1}\mathcal{R}_{\kappa}(\theta)$ are called {\em $R$-sectorial operators of angle $\theta$}. The constant $\inf\{\kappa\, |\, A\in \mathcal{R}_{\kappa}(\theta)\}$ is called {\em $R$-sectorial bound of $A$} and depends on $A$ and $\theta$. 
\end{definition} 

Let $T>0$, $E_1\overset{d}{\hookrightarrow} E_0$ be a densely and continuously injected complex Banach couple and $A(\cdot)\in C([0,T];\mathcal{L}(E_{1},E_{0}))$ be a continuous family of linear operators. Consider the Cauchy problem
\begin{eqnarray}\label{ANP1}
u'(t)+A(t)u(t)&=&g(t),\quad t\in(0,T],\\\label{ANP2}
u(0)&=&0,
\end{eqnarray}
where $g\in L^{p}(0,T;E_{0})$ for some $p\in(1,\infty)$. We will combine Theorem \ref{t2} with a freezing-of-coefficients type argument (see e.g. \cite[Theorem 5.7]{DHP}) in order to show well-posedness for the above problem. Hence, in the case of UMD spaces we wish to recover \cite[Theorem 2.5]{PS1} or equivalently \cite[Theorem 2.7]{Ar} for the case of continuously dependent over the non-autonomous parameter family $A(\cdot)$. For a different approach to the problem we also refer to \cite{Am2}. 

\begin{theorem}\label{tf}
Assume that $E_{0}$ is $UMD$ and that there exists some $\theta>\frac{\pi}{2}$ such that for each $t\in[0,T]$, $A(t)$ is $R$-sectorial of angle $\theta$. Then, the problem \eqref{ANP1}-\eqref{ANP2} is well-posed, i.e. for any $g\in L^{p}(0,T;E_{0})$ there exists a unique solution $u\in W^{1,p}(0,T;E_{0})\cap L^{p}(0,T;E_{1})$ that depends continuously on $g$.
\end{theorem}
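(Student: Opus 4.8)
\emph{Proof plan.} The plan is to read \eqref{ANP1}--\eqref{ANP2} as an inverse-of-the-sum problem in $E:=L^{p}(0,T;E_{0})$ and to reduce the non-autonomous operator to a compactly parametrised family of \emph{commuting} constant-coefficient problems to which Theorem \ref{t2} (through Lemma \ref{l7}) applies with uniform bounds. Let $\mathcal{B}:=\partial_{t}$ with domain ${}_{0}W^{1,p}(0,T;E_{0}):=\{u\in W^{1,p}(0,T;E_{0})\,|\,u(0)=0\}$; on a bounded interval with vanishing trace $\mathcal{B}$ is invertible, lies in $\mathcal{P}(\vartheta)$ for every $\vartheta<\pi/2$, and—$E_{0}$ being UMD—admits bounded $H^{\infty}$-calculus of every angle $<\pi/2$. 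Let $\mathcal{A}$ be pointwise multiplication by $A(\cdot)$, with $\mathcal{D}(\mathcal{A})=L^{p}(0,T;E_{1})$. Then $\mathcal{D}(\mathcal{B})\cap\mathcal{D}(\mathcal{A})$ is exactly the asserted solution class, the equation reads $(\mathcal{B}+\mathcal{A})u=g$ in $E$, and the theorem amounts to showing that $\mathcal{B}+\mathcal{A}:\mathcal{D}(\mathcal{B})\cap\mathcal{D}(\mathcal{A})\to E$ is a topological isomorphism.

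First I would upgrade the hypothesis to \emph{uniform} $R$-sectoriality: since $t\mapsto A(t)$ is continuous into $\mathcal{L}(E_{1},E_{0})$, $[0,T]$ is compact and $R$-sectoriality at a fixed angle is stable under small $\mathcal{L}(E_{1},E_{0})$-perturbations, the $R$-bounds and sectorial bounds of $\{A(t)\}_{t\in[0,T]}$ are uniformly bounded and $\sup_{t}\|A(t)^{\pm1}\|<\infty$. Hence for each frozen parameter $\xi\in[0,T]$ the constant-coefficient multiplication operator $\mathcal{A}_{\xi}$ (by $A(\xi)$) is $R$-sectorial on $E$, so $ON$-sectorial with respect to the Rademacher system, with bounds uniform in $\xi$—this passage from $R$-sectoriality of the fibres to $ON$-sectoriality of the Bochner-space operator is why the statement is restricted to $R$-sectorial $A(t)$. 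Moreover $\mathcal{A}_{\xi}$ commutes with the resolvent of $\mathcal{B}$. Choosing $\vartheta\in(\pi-\theta,\pi/2)$, possible since $\theta>\pi/2$, we get $\mathcal{B}\in\mathcal{H}^{\infty}(\vartheta)$, $\mathcal{A}_{\xi}\in\mathcal{ON}(\theta)$, $\vartheta+\theta>\pi$, and Condition \ref{c3} holds trivially because the resolvent commutator vanishes. Theorem \ref{t2} then yields closedness and sectoriality after a shift, while Lemma \ref{l7} (the commuting, parametrised case) sharpens this to: $\mathcal{B}+\mathcal{A}_{\xi}\in\mathcal{P}(0)$ with sectorial bound independent of $\xi$, and $\|\mathcal{A}_{\xi}(\mathcal{B}+\mathcal{A}_{\xi}+\nu)^{-1}\|_{\mathcal{L}(E)}$ bounded uniformly in $(\xi,\nu)\in[0,T]\times[0,\infty)$, equivalently $(\mathcal{B}+\mathcal{A}_{\xi}+\nu)^{-1}:E\to L^{p}(0,T;E_{1})$ bounded uniformly in $(\xi,\nu)$. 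This is precisely the input required by the localisation/freezing construction of the full resolvent in \cite[Theorem 5.7]{DHP}, with the non-autonomous variable in the role of the compact coefficient space.

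Next I would run that localisation. Fix a small $\varepsilon_{0}>0$, choose $\delta>0$ with $\|A(t)-A(s)\|_{\mathcal{L}(E_{1},E_{0})}\le\varepsilon_{0}$ for $|t-s|\le\delta$, cover $[0,T]$ by finitely many intervals $J_{l}$ of radius $\delta$ about $t_{l}$ with overlap multiplicity at most $2$, and take subordinate cut-offs $\varphi_{l},\psi_{l}$ with $\sum_{l}\varphi_{l}=1$, $\psi_{l}\equiv1$ near $\mathrm{supp}\,\varphi_{l}$, $\mathrm{supp}\,\psi_{l}\subset J_{l}$. With $L_{l}^{\nu}:=(\mathcal{B}+\nu+\mathcal{A}_{t_{l}})^{-1}$ and the parametrix $R^{\nu}:=\sum_{l}\psi_{l}L_{l}^{\nu}\varphi_{l}$, the identities $\psi_{l}\mathcal{A}=\mathcal{A}_{t_{l}}\psi_{l}+\psi_{l}(\mathcal{A}-\mathcal{A}_{t_{l}})$ and $[\partial_{t},\psi_{l}]=\psi_{l}'$ give $(\mathcal{B}+\nu+\mathcal{A})R^{\nu}=I+S_{1}^{\nu}+S_{2}^{\nu}$ with $S_{1}^{\nu}=\sum_{l}\psi_{l}(\mathcal{A}-\mathcal{A}_{t_{l}})L_{l}^{\nu}\varphi_{l}$ and $S_{2}^{\nu}=\sum_{l}\psi_{l}'L_{l}^{\nu}\varphi_{l}$. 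The bounded overlap together with $\|\psi_{l}(\mathcal{A}-\mathcal{A}_{t_{l}})\|_{\mathcal{L}(L^{p}(0,T;E_{1}),E)}\le\varepsilon_{0}$ and the uniform bound on $L_{l}^{\nu}:E\to L^{p}(0,T;E_{1})$ give $\|S_{1}^{\nu}\|_{\mathcal{L}(E)}\le C\varepsilon_{0}$ uniformly in $\nu\ge0$, hence $<1/2$ once $\varepsilon_{0}$ is fixed small; the commutator term is genuinely lower order and is damped by the shift, $\|S_{2}^{\nu}\|_{\mathcal{L}(E)}\le\bigl(\sum_{l}\|\psi_{l}'\|_{\infty}\bigr)\sup_{l}\|L_{l}^{\nu}\|_{\mathcal{L}(E)}\le C(\delta)/(1+\nu)\to0$. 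Fixing $\nu_{0}$ large yields $\|S_{1}^{\nu_{0}}+S_{2}^{\nu_{0}}\|_{\mathcal{L}(E)}<1$, hence a bounded right inverse of $\mathcal{B}+\nu_{0}+\mathcal{A}$ into $\mathcal{D}(\mathcal{B})\cap\mathcal{D}(\mathcal{A})$. Injectivity—hence bijectivity with bounded inverse—follows from the causal (Volterra) structure: $\mathcal{B}+\nu_{0}+\mathcal{A}$ is invertible on every short interval $[a,a+\tau_{0}]$ with zero initial data at $a$, since there $\mathcal{A}$ is an $\mathcal{L}(L^{p}(E_{1}),E)$-small perturbation of the frozen $\mathcal{A}_{a}$ whose maximal-regularity constant is bounded uniformly in $a$ and in the interval length, so a homogeneous solution vanishes successively on $[0,\tau_{0}],[\tau_{0},2\tau_{0}],\dots$ Finally the substitution $u\mapsto e^{-\nu_{0}t}u$, a bounded isomorphism of both $E$ and $\mathcal{D}(\mathcal{B})\cap\mathcal{D}(\mathcal{A})$ preserving the zero trace, carries $\mathcal{B}+\nu_{0}+\mathcal{A}$ to $\mathcal{B}+\mathcal{A}$; therefore $\mathcal{B}+\mathcal{A}:\mathcal{D}(\mathcal{B})\cap\mathcal{D}(\mathcal{A})\to E$ is an isomorphism, $u:=(\mathcal{B}+\mathcal{A})^{-1}g\in W^{1,p}(0,T;E_{0})\cap L^{p}(0,T;E_{1})$ is the unique solution of \eqref{ANP1}--\eqref{ANP2}, and it depends continuously on $g$.

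The main obstacle is the gluing step: the two error operators must be controlled with constants \emph{independent of the shift}—the principal one $S_{1}^{\nu}$ via the bounded overlap of the localisation and the smallness of $A(t)-A(t_{l})$ on $J_{l}$, and the commutator term $S_{2}^{\nu}$ by recognising it as lower order so that it is annihilated in the large-shift limit, a shift that is then harmless on a bounded time interval thanks to the exponential weight. A secondary subtlety, already flagged by the restriction to $R$-sectorial $A(t)$, is the transition from uniform $R$-sectoriality of the fibres $A(t)$ to the $ON$-sectoriality of the Bochner-space multiplication operators $\mathcal{A}_{\xi}$ demanded by Theorem \ref{t2}.
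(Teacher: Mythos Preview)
Your proposal is correct and follows the same overall strategy as the paper: recast \eqref{ANP1}--\eqref{ANP2} as a sum-of-operators problem in $L^{p}(0,T;E_{0})$, use the UMD hypothesis to get $\mathcal{B}=\partial_{t}\in\mathcal{H}^{\infty}$, upgrade the pointwise $R$-sectoriality of $A(t)$ to uniform $ON$-sectoriality of the frozen Bochner-space operators $\mathcal{A}_{\xi}$ via continuity and compactness, feed the commuting pairs $(\mathcal{B},\mathcal{A}_{\xi})$ into Theorem~\ref{t2} and Lemma~\ref{l7} to obtain uniform sectorial and maximal-regularity bounds, then glue by localisation/freezing (as in \cite[Theorem 5.7]{DHP}) and remove the shift by an exponential weight.

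The one genuine point of divergence is in the gluing step. The paper uses three nested cut-off families $\phi_{i},\psi_{i},\chi_{i}$ and defines blended local operators $A_{i}=A(t_{i})+B+\chi_{i}(A(t)-A(t_{i}))$; multiplying the equation by $\phi_{i}$ and inverting $A_{i}+c$ produces formula \eqref{LI}, which is first read as a \emph{left} inverse and then shown to be a \emph{right} inverse by the direct computation \eqref{RI}, using $\sum\psi_{i}\phi_{i}=1$ and $\sum\psi_{i}\phi_{i}'=0$. You instead build a right parametrix $R^{\nu}=\sum_{l}\psi_{l}(\mathcal{B}+\nu+\mathcal{A}_{t_{l}})^{-1}\varphi_{l}$ from the pure frozen operators, invert the Neumann remainder, and supply injectivity separately via the causal/Volterra structure on short subintervals. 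Both routes are standard and valid; the paper's buys a symmetric two-sided inverse from a single localisation formula at the cost of an extra cut-off layer, while yours keeps the parametrix simpler but trades this for an additional step-by-step uniqueness argument that requires the frozen maximal-regularity constants to be uniform also on subintervals with shifted initial point---a fact you correctly flag and which follows from Lemma~\ref{l7} by translation and restriction.
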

\begin{proof}

Let the Banach spaces $X_{0}=L^{p}(0,T;E_{0})$, $X_{1}=L^{p}(0,T;E_{1})$ and
 $$
 X_{2}=\{u\in W^{1,p}(0,T;E_{0})\, |\, u(0)=0\}.
 $$ 
 Consider the operators $A$ and $B$ in $X_{0}$ defined by 
$$
A:u(t)\mapsto (Au)(t)=A(t)u(t) \quad \text{with}\quad \mathcal{D}(A)=X_{1}
$$ 
and 
$$
B:u(t)\mapsto \partial_{t}u(t)=u'(t) \quad \text{with} \quad \mathcal{D}(B)=X_{2}.
$$ 

For any fixed $\xi\in[0,T]$, the operator $A(\xi):E_{1}\rightarrow E_{0}$ is $R$-sectorial of angle $\theta$. Furthermore, the sectorial bound and the $R$-sectorial bound of $A(\xi)$ can be chosen to be uniformly bounded in $\xi\in [0,T]$. This is easy to see by the continuity of $A(\cdot)$. For convenience, e.g. for the $R$-sectorial bound we argue by contradiction as follows. For each $\xi$ let $C_{\xi}$ be the $R$-sectorial bound of $A(\xi)$. Let $\{\xi_{k}\}_{k\in\mathbb{N}}$ be a sequence in $[0,T]$ such that $C_{\xi_{k}}\rightarrow\infty$ when $k\rightarrow\infty$. By possibly passing to a subsequence we may assume that $\xi_{k}\rightarrow \tilde{\xi}$ as $k\rightarrow\infty$ for certain $\tilde{\xi}\in [0,T]$. Let $\varepsilon>0$ be sufficiently small such that $t\in[\tilde{\xi}-\varepsilon,\tilde{\xi}+\varepsilon]$ (with $[\tilde{\xi}-\varepsilon,\tilde{\xi}+\varepsilon]$ replaced by $[0,\tilde{\xi}+\varepsilon]$ when $\tilde{\xi}=0$ and similarly when $\tilde{\xi}=T$) implies that 
$$
\|A(\tilde{\xi})-A(t)\|_{\mathcal{L}(E_{1},E_{0})}<\frac{1}{2\|A(\tilde{\xi})^{-1}\|_{\mathcal{L}(E_{0},E_{1})}}\min\{\frac{1}{1+C_{\tilde{\xi}}},\frac{1}{1+M_{\tilde{\xi}}}\},
$$ 
where $M_{\tilde{\xi}}$ is a sectorial bound for $A(\tilde{\xi})$. Then, due to Proposition \ref{perp} for such $t$ the operator $A(t)$ is $R$-sectorial and its $R$-sectorial bound is uniformly bounded by $2 C_{\tilde{\xi}}$, which gives us a contradiction. Therefore, each extension of $A(\xi)$ to an operator from $X_{1}$ to $X_{0}$ given by $(A(\xi)u)(t)=A(\xi)u(t)$, which we denote again by $A(\xi)$, is also $R$-sectorial of angle $\theta$ and its sectorial and $R$-sectorial bounds can be chosen to be uniformly bounded in $\xi$. 

Since $E_{0}$ is UMD by \cite[Theorem 8.5.8]{Ha} the operator $B$ admits a bounded $H^\infty$-calculus of angle $\omega$, for any $\omega<\frac{\pi}{2}$. For any $\xi\in[0,T]$ the operators $A(\xi)$ and $B$ are resolvent commuting and hence by Theorem \ref{t2}, for each $\xi$ there exists a $c_{0}(\xi)\geq0$ such that $A(\xi)+B+c_{0}(\xi)$ with $\mathcal{D}(A(\xi)+B+c_{0}(\xi))=X_{1}\cap X_{2}$ in $X_{0}$ is closed and belongs to the class $\mathcal{P}(0)$. Furthermore, by Lemma \ref{l7} each $c_{0}(\xi)$ can be chosen to be equal to zero and the $\mathcal{L}(X_{0})$-norm of $A(\xi)(A(\xi)+B+c)^{-1}$ is uniformly bounded in $(\xi,c)\in[0,T]\times[0,\infty)$.

Take $t_{1},...,t_{n}\in[0,T]$, $n\in\mathbb{N}$, $r>0$ and let $\{\chi_{i}\}_{i\in\{1,...,n\}}$ be a collection of smooth non-negative functions on $\mathbb{R}$ bounded by one such that $\chi_{i}=1$ in $[t_{i}-r,t_{i}+r]$ and $\chi_{i}=0$ outside of $[t_{i}-2r,t_{i}+2r]$ for each $i$. Let $\{\psi_{i}\}_{i\in\{1,...,n\}}$ and $\{\phi_{i}\}_{i\in\{1,...,n\}}$ be two further collections of smooth non-negative functions on $\mathbb{R}$ such that $\chi_{i}=1$ on $\mathrm{supp}(\psi_{i})$ and $\psi_{i}=1$ on $\mathrm{supp}(\phi_{i})$ for each $i$. Choose $\{t_{1},...,t_{n}\}$, $n$ and $r$ in such a way that $\{\phi_{i}\}_{i\in\{1,...,n\}}$ is a partition of unity in $[0,T]$. By the argument in the previous paragraph, each $A(t_{i})+B$ belongs to $\mathcal{P}(0)$. 

For each $i\in\{1,...,n\}$ let 
$$
A_{i}=\chi_{i}(A+B)+(1-\chi_{i})(A(t_{i})+B)=A(t_{i})+B+\chi_{i}(A-A(t_{i}))
$$
with $\mathcal{D}(A_{i})=X_{1}\cap X_{2}$ in $X_{0}$. By taking $r$ sufficiently small and $n$ large enough, from the continuity of $A(\cdot)$ and the uniform boundedness in $(\xi,c)\in[0,T]\times[0,\infty)$ of the $\mathcal{L}(X_{0})$-norm of $A(\xi)(A(\xi)+B+c)^{-1}$, we can achieve $A_{i}\in\mathcal{P}(0)$ for each $i$. More precisely, we have that 
\begin{gather}
(A_{i}+c)^{-1}=(A(t_{i})+B+c)^{-1}\sum_{k=0}^{\infty}\big(\chi_{i}(A(t_{i})-A)(A(t_{i})+B+c)^{-1}\big)^{k}, \quad c\geq0,
\end{gather}
provided that $\|\chi_{i}(A(t_{i})-A)(A(t_{i})+B+c)^{-1}\|_{\mathcal{L}(X_{0})}\leq\frac{1}{2}$. Here we have used the fact that the norm $\|A^{-1}(\xi)\|_{\mathcal{L}(X_{0},X_{1})}$ is uniformly bounded in $\xi\in[0,T]$ due to the continuity of $A(\cdot)$.

Take $u\in X_{1}\cap X_{2}$ and $g\in X_{0}$. By multiplying 
$$
(A+B+c)u=g
$$
with $\phi_{i}$ we get 
$$
(A+B+c)\phi_{i}u=\phi_{i}g+[A+B+c,\phi_{i}]u=\phi_{i}g+\phi'_{i}u,
$$
where we have used the fact that the commutator $[A+B+c,\phi_{i}]$ acts in $X_{1}\cap X_{2}$ by multiplication with $\phi'_{i}$. By applying the inverse of $A_{i}+c$ we obtain 
$$
\phi_{i}u=(A_{i}+c)^{-1}(\phi_{i}g+\phi'_{i}u),
$$
and therefore 
$$
\phi_{i}u=\psi_{i}(A_{i}+c)^{-1}(\phi_{i}g+\phi'_{i}u).
$$
By summing up we find that 
\begin{gather}\label{LI}
u=\sum_{i=1}^{n}\psi_{i}(A_{i}+c)^{-1}\phi_{i}g+\sum_{i=1}^{n}\psi_{i}(A_{i}+c)^{-1}\phi'_{i}u.
\end{gather}
Due to the sectoriality of $A_{i}$, the $\mathcal{L}(X_{1}\cap X_{2})$-norm of $(A_{i}+c)^{-1}$ decays like $c^{-1}$ when $c\rightarrow\infty$. Furthermore, multiplication by $\phi'_{i}$ induces a bounded map in $X_{1}\cap X_{2}$. Hence, by taking $c$ sufficiently large, from \eqref{LI} we obtain a left inverse $L$ of $A+B+c$ that belongs to $\mathcal{L}(X_{0},X_{1}\cap X_{2})$ and is expressed by
\begin{gather}
L=\Big(I-\sum_{i=1}^{n}\psi_{i}(A_{i}+c)^{-1}\phi'_{i}\Big)^{-1}\Big(\sum_{i=1}^{n}\psi_{i}(A_{i}+c)^{-1}\phi_{i}\Big).
\end{gather}

Moreover, from \eqref{LI} we estimate
\begin{eqnarray}\nonumber
\lefteqn{(A+B+c)L=(A+B+c)\sum_{i=1}^{n}\psi_{i}(A_{i}+c)^{-1}(\phi_{i}+\phi'_{i}L)}\\\nonumber
&=&\sum_{i=1}^{n}\psi_{i}(A+B+c)(A_{i}+c)^{-1}(\phi_{i}+\phi'_{i}L)+\sum_{i=1}^{n}[A+B+c,\psi_{i}](A_{i}+c)^{-1}(\phi_{i}+\phi'_{i}L)\\\label{RI}
&=&\sum_{i=1}^{n}\psi_{i}\phi_{i}+\sum_{i=1}^{n}\psi_{i}\phi'_{i}L+\sum_{i=1}^{n}\psi'_{i}(A_{i}+c)^{-1}(\phi_{i}+\phi'_{i}L).
\end{eqnarray}
Note that $\sum_{i=1}^{n}\psi_{i}\phi_{i}=1$ and $\sum_{i=1}^{n}\psi_{i}\phi'_{i}=0$. Also, by the sectoriality of $A_{i}$, the $\mathcal{L}(X_{0})$-norm of $(A_{i}+c)^{-1}$ tends to zero as $c\rightarrow\infty$. Therefore, by possibly increasing $c$, \eqref{RI} provides us a right inverse of $A+B+c$ which belongs to $\mathcal{L}(X_{0},X_{1}\cap X_{2})$. The result now follows by replacing $u(t)$ in \eqref{ANP1} with $e^{ct}v(t)$.
\end{proof}

\end{document}